\documentclass[11pt]{article}

\usepackage[T1]{fontenc}
\usepackage[utf8]{inputenc}
\usepackage{lmodern}
\usepackage{amsmath,amssymb,amsthm,mathtools}
\usepackage{mathrsfs}
\usepackage{enumitem}
\usepackage[margin=1in]{geometry}
\usepackage{microtype}
\usepackage{hyperref}

\usepackage{orcidlink}

\hypersetup{colorlinks=true,linkcolor=blue,citecolor=blue,urlcolor=blue}

\numberwithin{equation}{section}

\newtheorem{theorem}{Theorem}[section]
\newtheorem{proposition}[theorem]{Proposition}
\newtheorem{corollary}[theorem]{Corollary}
\newtheorem{lemma}[theorem]{Lemma}
\newtheorem{definition}[theorem]{Definition}

\newtheorem{assumption}[theorem]{Assumption}
\newtheorem{remark}[theorem]{Remark}
\newtheorem{warning}[theorem]{Warning}

\DeclareMathOperator{\Aut}{Aut}
\DeclareMathOperator{\Cat}{Cat}
\DeclareMathOperator{\CAlg}{CAlg}
\DeclareMathOperator{\CAlgMod}{CAlgMod}
\DeclareMathOperator{\End}{End}
\DeclareMathOperator{\Fun}{Fun}
\DeclareMathOperator{\GL}{GL}
\DeclareMathOperator{\LMod}{LMod}
\DeclareMathOperator{\Map}{Map}
\DeclareMathOperator{\Mod}{Mod}
\DeclareMathOperator{\Open}{Open}
\DeclareMathOperator{\Pic}{Pic}
\DeclareMathOperator{\RMod}{RMod}
\DeclareMathOperator{\Shv}{Shv}
\DeclareMathOperator{\Sp}{Sp}

\newcommand{\A}{\mathcal A}
\newcommand{\BrPic}{\operatorname{Br}^{\mathrm{Pic}}}
\newcommand{\C}{\mathcal C}

\newcommand{\M}{\mathscr M}
\newcommand{\OO}{\mathcal O}
\newcommand{\sOO}{\mathbb O}
\newcommand{\R}{\mathbb R}
\newcommand{\Z}{\mathbb Z}
\newcommand{\Spaces}{\mathcal S}
\newcommand{\underZ}{\underline{\mathbb Z}}
\newcommand{\PrL}{\operatorname{Pr}^{L}}
\newcommand{\CatInf}{\operatorname{Cat}_{\infty}}
\newcommand{\CAlgcn}{\CAlg^{\mathrm{cn}}}
\newcommand{\id}{\operatorname{id}}

\title{A Picard-Theoretic Brauer Object for Derived Smooth Manifolds}

\author{
  Yimu Mao\,\orcidlink{0009-0003-9288-6282}
  \and
  Christopher Tropp\,\orcidlink{0009-0004-0492-7476}
}

\date{}

\begin{document}

\maketitle

\begin{abstract}
Let $X=(|X|,\OO_X)$ be a derived smooth manifold in the sense of
Spivak.  After passing from the simplicial $C^\infty$-structure sheaf to a
connective spectral structure sheaf $\sOO_X$, we construct the intrinsic
Picard hypersheaf of invertible $\sOO_X$-modules and define its delooping
\[
  \BrPic_X:=B\Pic_{\sOO_X}.
\]
On the ordinary open site of $|X|$ we prove an equivalence of hypersheaves
of connected pointed spaces
\[
  \BrPic_X\simeq
  K(\underZ,1)\times B^2\GL_1(\sOO_X).
\]
The statement is unconditional at the level of Picard torsors.  Its
interpretation as a classification of forms of the module category is made
under an explicit category-valued open-hyperdescent hypothesis, and
representability by an internal $E_1$-algebra is separated further by a
global compact-local-generator hypothesis together with internal mapping
objects, their base-change equivalences, and relative Morita continuity.
For an ordinary paracompact
smooth manifold $M$, the real and complex coefficient theories recover,
respectively, the pointed-set decompositions
\[
 H^1_{\mathrm{sing}}(M;\Z)\times
 H^2_{\mathrm{sing}}(M;\Z/2)
 \quad\text{and}\quad
 H^1_{\mathrm{sing}}(M;\Z)\times
 H^3_{\mathrm{sing}}(M;\Z).
\]
\end{abstract}

\tableofcontents

\section{Introduction}

The theory of Azumaya algebras provides a natural bridge between algebra,
geometry, topology, and cohomology.  Over a commutative ring, an Azumaya
algebra is a noncommutative algebra which becomes a matrix algebra after a
suitable faithfully flat base change.  More generally, Azumaya algebras on
schemes are locally Morita equivalent to the structure sheaf.  Their
Morita-equivalence classes form the Brauer group, which is related to
degree-two cohomology with coefficients in the sheaf of units.

Closely related structures also occur in differential geometry and
mathematical physics.  Bundle gerbes, twisted vector bundles, and
D-branes in backgrounds with non-trivial \(B\)-fields naturally produce
Azumaya-type algebras and their module categories.  In particular,
Schweigert, Tropp, and Valentino established a Serre--Swan correspondence
for gerbe modules: for a bundle gerbe admitting a non-trivial module on a
compact smooth manifold, and more generally on a compact \'etale Lie
groupoid, the corresponding category of gerbe modules is equivalent to a
category of finitely generated projective modules over an associated
Azumaya algebra \cite{SchweigertTroppValentino}.  This illustrates how
Brauer-theoretic information can be encoded either geometrically, through
gerbes and twisted bundles, or algebraically, through Azumaya algebras and
Morita theory.

The homotopical refinement of this picture belongs to derived algebraic
geometry.  To\"en introduced derived Azumaya algebras and studied their
Morita theory over derived schemes \cite{Toen}.  In this setting, ordinary
rings are replaced by simplicial commutative rings or commutative ring
spectra, and module categories are treated as stable
\(\infty\)-categories.  The resulting Brauer theory retains higher
homotopical information which is invisible to the classical Brauer group.

For a connective \(E_\infty\)-ring \(R\), the associated Picard space
\[
  \Pic(R)
\]
is the grouplike \(E_\infty\)-space of tensor-invertible \(R\)-modules.
Its set of connected components records equivalence classes of invertible
modules, while its higher homotopy groups retain their automorphisms and
higher coherences.  If \(R\) is connective and
\(\pi_0R\) is a local ring, Antieau and Gepner proved that
\[
  \pi_0\Pic(R)\cong\mathbb Z,
\]
where \(n\in\mathbb Z\) corresponds to the suspension
\(\Sigma^nR\) \cite[Theorem~7.9]{AntieauGepner}.  Since
\[
  \Aut_R(\Sigma^nR)\simeq\GL_1(R),
\]
the entire Picard space has the underlying homotopy type
\[
  \Pic(R)\simeq\mathbb Z\times B\GL_1(R).
\]
After delooping, the corresponding Picard--Brauer homotopy type is
therefore governed by
\[
  K(\mathbb Z,1)
  \qquad\text{and}\qquad
  B^2\GL_1(R).
\]
This is the higher-categorical source of the familiar appearance of
degree-one integral classes and degree-two unit-gerbe classes in derived
Brauer theory.

Derived differential geometry provides a distinct geometric setting for
these ideas.  In Spivak's theory, a derived smooth manifold is locally
modeled on a homotopy zero locus of a smooth map and is equipped with a
sheaf of simplicial \(C^\infty\)-rings \cite{Spivak}.  Such a structure
sheaf contains the ordinary algebraic operations together with operations
induced by arbitrary smooth maps.  By restricting to polynomial
operations, every simplicial \(C^\infty\)-ring has an underlying
simplicial commutative \(\mathbb R\)-algebra.  In characteristic zero, the
latter can be compared with a connective commutative
\(H\mathbb R\)-algebra spectrum.

This passage from smooth derived rings to connective ring spectra makes
the language of stable module categories, invertible modules, derived
units, and Picard spaces available in derived smooth geometry.  At the
same time, an important distinction remains between the relevant
topologies.  Derived algebraic Brauer theory is naturally formulated using
the \'etale or fppf topology, whereas a derived smooth manifold is
naturally equipped with the ordinary open-cover topology of its underlying
space.  In particular, an algebra which becomes Morita trivial after an
\'etale extension need not be trivialized by an ordinary open cover.  The
classical quaternion algebra over \(\mathbb R\) already exhibits this
difference: it represents the non-trivial element of
\(\operatorname{Br}(\mathbb R)\cong\mathbb Z/2\), although a one-point
space has no non-trivial ordinary open cover.

For ordinary smooth manifolds, the topology of the unit sheaf produces
familiar cohomological invariants.  The sheaf of nowhere-vanishing
real-valued smooth functions decomposes as
\[
  C^\infty_M(\mathbb R)^\times
  \cong
  \underline{\mathbb Z/2}\times C^\infty_M(\mathbb R),
\]
using the sign and the logarithm of the absolute value.  Since the sheaf
of smooth functions is fine, its positive-degree cohomology vanishes, and
one obtains
\[
  H^q\bigl(M;C^\infty_M(\mathbb R)^\times\bigr)
  \cong H^q(M;\mathbb Z/2),
  \qquad q>0.
\]
For complex-valued smooth functions, the exponential sequence
\[
  0\longrightarrow\underline{\mathbb Z}
  \longrightarrow C^\infty_M(\mathbb C)
  \xrightarrow{\exp(2\pi i\,\cdot)}
  C^\infty_M(\mathbb C)^\times
  \longrightarrow 1
\]
instead gives
\[
  H^q\bigl(M;C^\infty_M(\mathbb C)^\times\bigr)
  \cong H^{q+1}(M;\mathbb Z),
  \qquad q\geq1.
\]
In degree two, this produces the degree-three integral class traditionally
associated with bundle gerbes and Dixmier--Douady theory.

These algebraic, homotopical, and differential-geometric perspectives
motivate the study of Picard and Brauer-type objects attached to the
spectral structure sheaves of derived smooth manifolds.  The presence of
genuinely derived structure is especially significant: the derived unit
object \(\GL_1(R)\) need not be discrete, and its higher homotopy groups
retain information which cannot be recovered solely from the ordinary
unit group \((\pi_0R)^\times\).

\section{Preliminaries}

Throughout the paper, the ambient hypercomplete $\infty$-topos is
\begin{equation}\label{eq:ambient-topos}
 \mathcal X:=\Shv(|X|,\Spaces)^{\wedge}.
\end{equation}
Thus every occurrence of $\mathcal X$, including the variance conventions
below, refers to the hypercompletion of the $\infty$-topos of space-valued
sheaves on the underlying topological space $|X|$.

\subsection{Universe, variance, and module conventions}

We work in fixed nested Grothendieck universes.  All indexing categories
are small in the larger universe, and all presentable categories are large
in the smaller one.  This suppresses size changes in $\PrL$ and in total
categories of algebra and module objects; no mathematical argument depends
on the choice.

For a simplicial object $U_\bullet$, its indexing convention is
$U_\bullet\colon\Delta^{\mathrm{op}}\to\mathcal X$.  Descent data form the
cosimplicial diagram $[n]\mapsto\mathcal X_{/U_n}$, so its limit is indexed
by $\Delta$.  A neighbourhood colimit $\operatorname*{colim}_{x\in U}$ is
taken over the filtered category whose objects are open neighbourhoods of
$x$ and whose arrows point toward smaller neighbourhoods.  Restriction is
therefore covariant along the arrows of that filtered category.

All tensor products of presentable categories are tensor products in
$\PrL$.  All tensor products of modules are derived relative tensor
products.  Unless a side is displayed, $\Mod_A$ is used only when $A$ is
commutative.  For a possibly noncommutative $E_1$-algebra $A$, we always
write $\RMod_A$ and $\LMod_A$.  Composition in an endomorphism algebra is
written
\begin{equation}\label{eq:multiplication-convention}
 ab:=a\circ b.
\end{equation}
With this convention, precomposition makes $\underline{\operatorname{Hom}}
(E,-)$ a right module; this type check is carried out explicitly in
Section~\ref{sec:morita-details}.

If $\mathcal D$ is an $\infty$-category, $\mathcal D^\simeq$ denotes its
maximal $\infty$-groupoid.  If it is symmetric monoidal,
$\mathcal D^{\mathrm{inv}}$ denotes the full subcategory of tensor-invertible
objects.  We distinguish the Picard \emph{space}
$(\mathcal D^{\mathrm{inv}})^\simeq$ from its set of components.  This
distinction is indispensable: the local theorem used below computes
$\pi_0\Pic(R)$, while the automorphisms of each representative contribute
the factor $B\GL_1(R)$.

\subsection{Sheaves, hypersheaves, stalks, and hypercovers}

Let $T$ be a topological space and $\Open(T)$ its poset of open subsets.
A presheaf of spaces is a functor
\[
 F\colon\Open(T)^{\mathrm{op}}\longrightarrow\Spaces.
\]
It is a sheaf when it sends ordinary covering Čech nerves to limit
diagrams.  It is a hypersheaf when it satisfies the same condition for
every open hypercover.  Equivalently, it is a hypercomplete object of the
$\infty$-topos of sheaves.  The passage from ordinary descent to
hyperdescent is not cosmetic: stabilization and module constructions below
are applied to an arbitrary hypercover, so the effectiveness statement
used in the proof must hold for that hypercover.

\begin{definition}[Open hypercover]\label{def:open-hypercover}
An open hypercover of $U\subseteq T$ is an augmented simplicial object
$U_\bullet\to U$ in the slice topos such that, for every $n\geq0$, the map
from $U_n$ to the $n$-th matching object is an effective epimorphism.  Each
$U_n$ may be a coproduct of open subobjects.  Its geometric realization is
the colimit of the augmented simplicial diagram.
\end{definition}

The terms $U_n$ need not themselves be open subterminals.  They are objects
of $\mathcal X$ of the form $\coprod_\alpha u_{U_{n,\alpha}}$.  The slice,
coefficient-algebra, and module notation used for these coproduct objects is
extended explicitly in Definition~\ref{def:intrinsic-module}; consequently
every later occurrence of $\M(U_n)$ is typed even when $U_n$ is such a
coproduct.

The definition is the specialization of the hypercover definition in
\cite[Section~6.5.3]{LurieHTT}.  Hypercompleteness is characterized by
descent for such hypercovers in \cite[Theorem~6.5.3.12]{LurieHTT}.  We use
this characterization twice: first to identify the augmentation colimit
with $U$, and then to recognize the Picard presheaf as a hypersheaf.

For a point $x\in T$, the stalk of a sheaf of spaces is
\begin{equation}\label{eq:stalk-neighbourhood}
 F_x\simeq\operatorname*{colim}_{x\in U}F(U).
\end{equation}
This neighbourhood formula is the topological-space case of
\cite[Remark~6.5.2.2]{LurieHTT}.  It also applies to sheaves of spectra and
algebra objects after passing to their underlying objects, provided the
relevant forgetful functor preserves filtered colimits.  Whenever that
qualification matters below, it is proved rather than left implicit.

\begin{remark}[What enough points will and will not do]\label{rem:enough-points}
Enough points detects equivalences between already constructed
hypersheaves.  It does not by itself prove that a presheaf is a hypersheaf,
does not construct a global map from unrelated stalk maps, and does not
turn an equivalence in $\PrL$ into a limit in $\CatInf$.  We first construct
global morphisms and prove descent; only then do we test those morphisms on
stalks.
\end{remark}

\subsection{Simplicial \texorpdfstring{$C^\infty$}{C-infinity}-rings and derived smooth manifolds}

Let $\mathsf{CartSp}$ be the category of finite-dimensional Euclidean
spaces and smooth maps.  A simplicial $C^\infty$-ring is a finite-product
preserving functor
\[
  A\colon N(\mathsf{CartSp})\longrightarrow\Spaces.
\]
Thus $A(\R^n)\simeq A(\R)^n$, and every smooth map
$f\colon\R^n\to\R^m$ induces a homotopy-coherent operation
$A(f)\colon A(\R)^n\to A(\R)^m$; see
\cite[Definition~1.1.89 and the discussion following it]{Steffens}.

\subsubsection{Smooth operations and the underlying algebra}

Finite-product preservation means that the entire functor is determined,
up to coherent equivalence, by the space $A(\R)$ together with one
operation for every smooth map between Euclidean spaces.  For example,
addition, multiplication, scalar multiplication by $r\in\R$, and additive
inverse arise from the smooth maps
\[
 (x,y)\mapsto x+y,\qquad (x,y)\mapsto xy,\qquad
 x\mapsto rx,\qquad x\mapsto-x.
\]
The functorial identities among smooth maps impose all ordinary ring
identities, while non-polynomial maps such as $x\mapsto e^x$ and
$x\mapsto\sin x$ supply additional $C^\infty$ operations.  Because the
values are spaces rather than sets, these identities and operations carry
coherent higher homotopies.

Let $\mathsf{Poly}_{\R}$ be the subcategory with the same Euclidean objects
and polynomial maps.  Restriction along
$\mathsf{Poly}_{\R}\hookrightarrow\mathsf{CartSp}$ forgets only the
non-polynomial operations.  Under the standard Lawvere-theory description,
the resulting finite-product-preserving functor is a simplicial
commutative $\R$-algebra.  Thus $A^{\mathrm{alg}}$ retains the addition,
multiplication, unit, and scalar operations needed by spectral algebra,
but not the full smooth functional calculus.

Restriction along polynomial maps defines the algebraization functor
\[
  (-)^{\mathrm{alg}}\colon
  sC^\infty\mathrm{Ring}\longrightarrow s\CAlg_{\R}.
\]
It is conservative, preserves all limits, and preserves sifted colimits;
in particular it preserves filtered colimits
\cite[Remark~1.1.91]{Steffens}.  Consequently,
\begin{equation}\label{eq:alg-filtered}
 \left(\operatorname*{colim}_i A_i\right)^{\mathrm{alg}}
 \simeq
 \operatorname*{colim}_i A_i^{\mathrm{alg}}
\end{equation}
for every filtered diagram.

The word ``conservative'' means that a morphism $A\to B$ of simplicial
$C^\infty$-rings is an equivalence whenever
$A^{\mathrm{alg}}\to B^{\mathrm{alg}}$ is an equivalence.  It does not mean
that algebraization is fully faithful: distinct systems of smooth
operations can contain information not visible from the polynomial
operations alone.  Limit preservation will be used for finite-limit
constructions, while sifted-colimit preservation is used specifically for
the filtered neighbourhood colimits defining stalks.

\subsubsection{Local simplicial \texorpdfstring{$C^\infty$}{C-infinity}-rings}

A simplicial $C^\infty$-ring $A$ is \emph{local} when the ordinary ring
$\pi_0(A^{\mathrm{alg}})$ is local
\cite[Definition~5.10]{Spivak}.  A local $C^\infty$-ringed space is a pair
$X=(|X|,\OO_X)$ whose stalks are local simplicial $C^\infty$-rings
\cite[Definition~6.3]{Spivak}.  A derived smooth manifold is a local
$C^\infty$-ringed space locally equivalent to an affine derived manifold,
the latter being a derived zero locus
\[
  \R^n\times^h_{\R^k}\R^0
\]
of a smooth map $\R^n\to\R^k$
\cite[Definition~6.15]{Spivak}.  The later Picard calculation uses the
resulting stalkwise locality; it does not require the stalks to be discrete.

Explicitly, locality means that
$\pi_0(A^{\mathrm{alg}})$ has a unique maximal ideal, or equivalently that
for every $a\in\pi_0(A^{\mathrm{alg}})$ either $a$ or $1-a$ is a unit.
There is no condition forcing $\pi_i(A^{\mathrm{alg}})$ to vanish for
$i>0$.  Those higher homotopy groups encode the derived directions; the
Picard calculation below uses only connectivity together with locality of
$\pi_0$.

For a sheaf $\OO_X$ of simplicial $C^\infty$-rings, its stalk is the
filtered colimit
\begin{equation}\label{eq:Cinfty-stalk}
 \OO_{X,x}\simeq\operatorname*{colim}_{x\in U}\OO_X(U).
\end{equation}
Spivak's Definition~6.3 imposes locality on every such stalk (together with
the fibrancy/descent conditions in the chosen model).  Applying
$(-)^{\mathrm{alg}}$ and \eqref{eq:alg-filtered} gives
\[
 \OO_{X,x}^{\mathrm{alg}}\simeq
 \operatorname*{colim}_{x\in U}\OO_X(U)^{\mathrm{alg}},
\]
so the local ordinary ring used later is
$\pi_0(\OO_{X,x}^{\mathrm{alg}})$.

\subsubsection{Affine derived zero loci}

Let $f\colon\R^n\to\R^k$ be smooth and let
$0\colon\R^0\to\R^k$ select the origin.  Its derived zero locus is the
homotopy pullback
\begin{equation}\label{eq:derived-zero-locus}
 \R^n\times^h_{\R^k}\R^0.
\end{equation}
The underlying topological space is the ordinary zero set $f^{-1}(0)$,
whereas the structure sheaf remembers the homotopy-theoretic failure of
the intersection to be transverse.  In algebraic language, the structure
object is modeled locally by a derived tensor product rather than an
ordinary tensor product; its positive homotopy groups measure the excess
intersection data.  When the intersection is transverse, the derived
directions vanish locally and the model reduces to an ordinary smooth
manifold.

By \cite[Definition~6.15]{Spivak}, a derived smooth manifold is a local
$C^\infty$-ringed space admitting an open cover by such affine derived
manifold models.  Consequently
\begin{equation}\label{eq:derived-implies-local-stalk}
 X\text{ derived smooth}\quad\Longrightarrow\quad
 \pi_0(\OO_{X,x}^{\mathrm{alg}})\text{ is local for every }x\in|X|.
\end{equation}
The implication follows already from the ``local ringed space'' part of
the definition; the affine presentation explains its geometric source.

\subsection{Spectralization}

Spivak proves, over the characteristic-zero field $\R$, a Quillen
equivalence between simplicial commutative $\R$-algebras and connective
commutative differential graded $\R$-algebras
\cite[Proposition~5.9]{Spivak}.  A Quillen equivalence induces a
Dwyer--Kan equivalence of simplicial localizations
\cite[Proposition~5.4]{DwyerKan}; hence it induces an equivalence of the
localized $\infty$-categories
\begin{equation}\label{eq:Psi}
 \Psi\colon
 s\CAlg_{\R}[W_s^{-1}]
 \xrightarrow{\ \simeq\ }
 \operatorname{cdga}^{\geq0}_{\R}[W_{\mathrm{dg}}^{-1}].
\end{equation}
Here $W_s$ denotes the simplicial weak equivalences and $W_{\mathrm{dg}}$
the quasi-isomorphisms.

Characteristic-zero operadic rectification compares strict commutative
dg-algebras with $E_\infty$-algebras in chain complexes
\cite[Theorem~4.1.1]{Hinich}.  Richter--Shipley construct the spectral
comparison between $E_\infty$-algebras in unbounded chain complexes and
commutative $H\R$-algebra spectra
\cite[Corollary~8.3]{RichterShipley}.  Since that result is unbounded, we
record the connective essential-surjectivity step.  If $B$ is a strict
commutative dg $\R$-algebra in homological grading with $H_i(B)=0$ for
$i<0$, its strict good truncation is
\begin{equation}\label{eq:strict-connective-truncation}
 (\tau_{\geq0}^{\mathrm{str}}B)_i=
 \begin{cases}
  B_i,&i>0,\\
  \ker(d\colon B_0\to B_{-1}),&i=0,\\
  0,&i<0.
 \end{cases}
\end{equation}
The differential, unit, and multiplication of $B$ restrict to
\eqref{eq:strict-connective-truncation}: in degree zero the product of two
cycles is a cycle, and the Leibniz rule makes all mixed products compatible
with the truncated differential.  The inclusion
$\tau_{\geq0}^{\mathrm{str}}B\to B$ is a quasi-isomorphism when the negative
homology of $B$ vanishes.

Now start with a connective commutative $H\R$-algebra.  The unbounded
Richter--Shipley comparison, followed by characteristic-zero rectification,
represents it by a strict unbounded commutative dg algebra $B$ with
$H_i(B)=0$ for $i<0$.  Replacing $B$ by
$\tau_{\geq0}^{\mathrm{str}}B$ proves essential surjectivity from strict
connective cdgas.  Full faithfulness follows by taking the full connective
subcategories of the unbounded localized $\infty$-categories.  Since the
comparisons preserve homology/homotopy groups, we therefore obtain an
equivalence
\begin{equation}\label{eq:Phi}
 \Phi\colon
 \operatorname{cdga}^{\geq0}_{\R}[W_{\mathrm{dg}}^{-1}]
 \xrightarrow{\ \simeq\ }
 \CAlg^{\mathrm{cn}}_{H\R}.
\end{equation}
We fix \eqref{eq:Psi} and \eqref{eq:Phi} once and for all and put
\begin{equation}\label{eq:spectralization}
 A^{\mathrm{sp}}:=\Phi\Psi(A^{\mathrm{alg}}).
\end{equation}
The ordinary Dold--Kan functor alone is not being asserted to preserve
strict commutative multiplications.  The rectification and spectral
comparison above are essential.

\subsubsection{The four comparisons and what each one preserves}

It is useful to isolate the four logically distinct operations in
\eqref{eq:spectralization}.
\begin{enumerate}[label=\textup{(\arabic*)},leftmargin=3em]
\item Restriction along polynomial maps forgets the non-polynomial smooth
operations and produces $A^{\mathrm{alg}}$.  Preservation of filtered
colimits at this step is exactly the sifted-colimit assertion in
\cite[Remark~1.1.91]{Steffens}; it is not deduced merely from the existence
of a forgetful functor.
\item Spivak's normalization comparison is a Quillen equivalence in
characteristic zero \cite[Proposition~5.9]{Spivak}.  Passing to localized
$\infty$-categories is justified by \cite[Proposition~5.4]{DwyerKan}.
Thus $\Psi$ is an equivalence, not just a homotopy-category equivalence.
\item Strict commutative multiplication on the dg side is compared with a
coherently commutative multiplication by rectification.  This is where
characteristic zero is used; see \cite[Theorem~4.1.1]{Hinich}.
\item The algebraic Eilenberg--Mac Lane comparison places the result in
commutative $H\R$-algebra spectra.  The cited spectral comparison is
\cite[Corollary~8.3]{RichterShipley}.  It identifies homology with homotopy,
and the strict truncation argument
\eqref{eq:strict-connective-truncation} supplies the connective
essential-surjectivity needed for \eqref{eq:Phi}.
\end{enumerate}
The composite is a functor of localized $\infty$-categories.  Hence weak
equivalences become equivalences and coherent diagrams remain coherent
after spectralization.

\subsubsection{Why the comparison is genuinely \texorpdfstring{$\infty$}{infinity}-categorical}

A Quillen equivalence gives an equivalence of homotopy categories, but the
construction of sheaves, stalks, algebra objects, and modules also depends
on derived mapping spaces and their coherences.  Dwyer--Kan,
Proposition~5.4, identifies the simplicial localizations, so for cofibrant-
fibrant representatives $A,B$ the comparison induces equivalences of
derived mapping spaces
\[
 \Map_{s\CAlg_{\R}[W_s^{-1}]}(A,B)
 \simeq
 \Map_{\operatorname{cdga}^{\geq0}_{\R}[W_{\mathrm{dg}}^{-1}]}
 (\Psi A,\Psi B).
\]
Composition and all higher homotopies are preserved.  This is the reason
the symbol $\Psi$ in \eqref{eq:Psi} denotes an equivalence of localized
$\infty$-categories rather than only an equivalence
$\operatorname{Ho}(s\CAlg_{\R})\simeq
\operatorname{Ho}(\operatorname{cdga}^{\geq0}_{\R})$.

The connective grading convention is chosen so that the normalized
homology of a simplicial algebra agrees with the nonnegative homotopy of
the associated ring spectrum:
\begin{equation}\label{eq:comparison-homotopy-groups}
 \pi_i(A^{\mathrm{sp}})
 \cong H_i(\Psi(A^{\mathrm{alg}}))
 \cong\pi_i(A^{\mathrm{alg}}),\qquad i\geq0.
\end{equation}
In particular, $\pi_0(A^{\mathrm{sp}})$ is canonically the ordinary ring
$\pi_0(A^{\mathrm{alg}})$.  This identity, rather than any preservation of
``local objects'' by a model functor, is what transfers locality to the
spectral stalk in Section~3.

The same comparison identifies the Postnikov filtration on both sides.
Indeed, an object is $n$-truncated precisely when the groups in
\eqref{eq:comparison-homotopy-groups} vanish in degrees greater than $n$,
and the $n$th Postnikov truncation is characterized by this vanishing
together with its universal map from the original object.  Since $\Phi$
and $\Psi$ are inverse equivalences and preserve mapping spaces, they carry
that universal property to the corresponding one.  Thus the comparison
preserves connective objects, Postnikov truncations, and the homotopy
groups displayed in \eqref{eq:comparison-homotopy-groups}.

\begin{lemma}[Filtered colimits and homotopy groups]\label{lem:filtered-pi}
Let $\{R_i\}_{i\in I}$ be a filtered diagram in $\CAlgcn$.  Then
$\operatorname*{colim}_iR_i$ is connective and, for every $n\geq0$,
\[
 \pi_n\!\left(\operatorname*{colim}_iR_i\right)
 \cong\operatorname*{colim}_i\pi_n(R_i).
\]
\end{lemma}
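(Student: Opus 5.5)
The argument has two parts: first show that the forgetful functor $\CAlgcn\to\Sp$ preserves filtered colimits, and then compute homotopy groups of a filtered colimit of spectra.

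\emph{Step 1: reduction to spectra.} For the first part I would factor the forgetful functor as
\[
 \CAlgcn\hookrightarrow\CAlg(\Sp)\to\Sp .
\]
The second functor preserves sifted, hence filtered, colimits by \cite{LurieHTT}-style results on algebras over an $\infty$-operad. The relevant reference is Lurie's Higher Algebra, Corollary~3.2.3.2, which applies because the smash product on $\Sp$ preserves colimits separately in each variable. For the first functor, I would use that the connective spectra $\Sp_{\geq0}$ form a symmetric monoidal subcategory closed under colimits. Thus $\CAlgcn\simeq\CAlg(\Sp_{\geq0})$, and filtered colimits in $\CAlg(\Sp_{\geq0})$ are computed underlying in $\Sp_{\geq0}$. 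Since $\Sp_{\geq0}\subset\Sp$ is closed under colimits, these agree with the colimits computed in $\Sp$. It follows that the underlying spectrum of $\operatorname*{colim}_iR_i$ is the colimit in $\Sp$ of the underlying spectra, and that this colimit is connective.

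\emph{Step 2: homotopy groups.} For the second part I would use
\[
 \pi_n(E)\cong\pi_0\Map_{\Sp}(\Sigma^n\mathbb S,E).
\]
Because the sphere spectrum $\mathbb S$ is compact in $\Sp$, every shift $\Sigma^n\mathbb S$ is compact as well. Hence $\Map(\Sigma^n\mathbb S,-)$ commutes with filtered colimits of spaces. Applying $\pi_0$, which commutes with filtered colimits of spaces, then gives the stated isomorphism. Equivalently, one can write $\pi_n E=\operatorname*{colim}_k\pi_{n+k}\Omega^{\infty-k}E$ and use that homotopy groups of spaces commute with filtered colimits. The same argument covers negative $n$, where all the groups vanish, which reconfirms connectivity.

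\emph{Main obstacle.} The only delicate point is Step 1. One has to check that filtered colimits in the connective subcategory are computed in spectra, and not only as $\tau_{\geq0}$ of a colimit. I would justify this by closure of $\Sp_{\geq0}$ under all colimits, together with the general fact that the forgetful functor from $\CAlg$ creates sifted colimits when the tensor product preserves them in each variable. Step 2 is routine.
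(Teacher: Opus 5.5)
Your proposal is correct and follows essentially the same route as the paper: the forgetful functor from commutative algebras to spectra preserves sifted colimits, hence filtered ones, by Higher Algebra, Section~3.2.3; homotopy groups of spectra commute with filtered colimits; and connectivity is preserved. Your factorization through $\CAlg(\Sp_{\geq0})$ only makes explicit a point the paper leaves implicit, namely that the filtered colimit taken in $\CAlgcn$ agrees with the one taken in $\CAlg$ and in $\Sp$.
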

\begin{proof}
The forgetful functor from commutative algebra objects to spectra preserves
sifted colimits; this is the commutative-algebra case of
\cite[Section~3.2.3]{LurieHA}.  Homotopy groups of spectra commute with
filtered colimits, and a filtered colimit of connective spectra remains
connective.  Negative homotopy groups therefore remain zero.
\end{proof}

Because $\Phi$ and $\Psi$ are equivalences of $\infty$-categories, they
preserve all existing limits and colimits.  Together with
\eqref{eq:alg-filtered}, this yields
\begin{equation}\label{eq:sp-filtered}
 \left(\operatorname*{colim}_i A_i\right)^{\mathrm{sp}}
 \simeq
 \operatorname*{colim}_i A_i^{\mathrm{sp}}
\end{equation}
for filtered diagrams.  Applying the construction to $\OO_X$ and taking
hypercomplete sheafification when necessary gives the connective spectral
structure sheaf $\sOO_X$.  Hypercompletion on a topological space does not
alter the stalks used below; this is part of the enough-points description
in \cite[Remark~6.5.4.8(6)]{LurieHTT}.

\subsubsection{Presheaf spectralization versus hypersheafification}

Write $\sOO_X^{\mathrm{pre}}(U):=\OO_X(U)^{\mathrm{sp}}$.  This is an
objectwise construction.  We hypersheafify inside commutative ring-spectrum
objects and define
\begin{equation}\label{eq:spectral-sheaf-type}
 \sOO_X\in\CAlg\bigl(\Sp(\mathcal X)\bigr)
\end{equation}
to be the resulting hypercomplete commutative ring-spectrum object.  Thus
the multiplication and unit are hypersheafified together with the
underlying spectrum; they are not added after hypersheafification.  We do
not assert that every functor
in the comparison chain preserves the infinite limits expressing
hyperdescent; instead, we compare objectwise and then hypersheafify.

At a point $x$, the unit
$\sOO_X^{\mathrm{pre}}\to\sOO_X$ is a stalk equivalence.  With
\eqref{eq:sp-filtered} this gives
\begin{equation}\label{eq:presheaf-to-stalk}
 (\sOO_X)_x
 \simeq\operatorname*{colim}_{x\in U}\OO_X(U)^{\mathrm{sp}}
 \simeq\left(\operatorname*{colim}_{x\in U}\OO_X(U)\right)^{\mathrm{sp}}.
\end{equation}
Thus spectralization commutes with stalks for the specific reason that
stalks are filtered colimits.  No preservation of arbitrary sheaf limits
is being claimed.

\subsection{The hypercomplete ambient \texorpdfstring{$\infty$}{infinity}-topos}

Recall from \eqref{eq:ambient-topos} that
$\mathcal X=\Shv(|X|,\Spaces)^{\wedge}$.
For a general topological space, the ordinary sheaf $\infty$-topos need not
already be hypercomplete.  The special fact used here is that
$\Shv(|X|,\Spaces)^{\wedge}$ has enough points
\cite[Remark~6.5.4.8(6)]{LurieHTT}; one must not replace this by the false
general assertion that every hypercomplete $\infty$-topos has enough
points, for which see \cite[Counterexample~6.5.4.9]{LurieHTT}.  Therefore a
map in $\mathcal X$ is an equivalence if and only if it is an equivalence on
all stalks.

For an open subset $U\subseteq |X|$, let $u_U\in\mathcal X$ be the
corresponding subterminal object and write
\[
 \mathcal X_{/U}:=\mathcal X_{/u_U}.
\]
It is again an $\infty$-topos by
\cite[Proposition~6.3.5.1(1)]{LurieHTT}.  The inverse-image functor
$j_U^*\colon\mathcal X\to\mathcal X_{/U}$ is the functor
$F\mapsto F\times u_U$ and preserves colimits as well as finite limits
\cite[Proposition~6.3.5.1(2)]{LurieHTT}.

\subsubsection{Slice objects over an open subterminal}

Because $u_U$ is subterminal, $\mathcal X_{/u_U}$ identifies with the
hypercomplete topos of sheaves on $U$.  We retain slice notation because
it keeps all restriction maps functorial.  If $V\subseteq U$, pullback
defines
\[
 \rho_{U,V}\colon\mathcal X_{/U}\longrightarrow\mathcal X_{/V},
 \qquad(F\to u_U)\longmapsto(F\times_{u_U}u_V\to u_V).
\]
It preserves finite limits and colimits, and the canonical equivalences
$\rho_{V,W}\rho_{U,V}\simeq\rho_{U,W}$ provide coherent restriction data.

The terminal object of $\mathcal X_{/U}$ is
$u_U\xrightarrow{\id}u_U$, whereas its initial object is the initial sheaf
mapping to $u_U$.  They need not agree.  Hence the slice is generally not
pointed.  This elementary observation is the reason the proof uses the
non-pointed stabilization formula of
\cite[Remark~1.4.2.25]{LurieHA}, derived there from
\cite[Remark~1.4.2.18 and Proposition~1.4.2.24]{LurieHA}.

\subsection{Intrinsic module categories}

Although $\mathcal X_{/U}$ need not be pointed, the standard stabilization
notation is $\Sp(\mathcal X_{/U})$.  There is a natural equivalence
\[
 \Sp((\mathcal X_{/U})_*)\simeq\Sp(\mathcal X_{/U})
\]
by \cite[Remark~1.4.2.18]{LurieHA}, and the non-pointed loop-tower formula
is
\[
 \Sp(\mathcal X_{/U})
 \simeq
 \operatorname*{lim}
 \left(\cdots\xrightarrow{\Omega}(\mathcal X_{/U})_*
 \xrightarrow{\Omega}(\mathcal X_{/U})_*\right)
\]
by \cite[Remark~1.4.2.25]{LurieHA}.  Stability and presentability follow,
respectively, from \cite[Corollary~1.4.2.17]{LurieHA} and
\cite[Proposition~1.4.4.4(1)]{LurieHA}.

The symmetric monoidal structure is not introduced merely by saying
``sheafwise smash product.''  The category $\PrL$ is symmetric monoidal
\cite[Proposition~4.8.1.15]{LurieHA}, and
\[
 \Sp(\mathcal X_{/U})\simeq\mathcal X_{/U}\otimes\Sp
\]
by \cite[Example~4.8.1.23]{LurieHA}.  Combining the Cartesian monoidal
structure on $\mathcal X_{/U}$ with the smash product on spectra
\cite[Corollary~4.8.2.19]{LurieHA} gives a presentably symmetric monoidal
stable structure on $\Sp(\mathcal X_{/U})$.  Stabilizing $j_U^*$ therefore
gives a symmetric monoidal functor, and we define
\[
 \sOO_U:=\Sp(j_U^*)(\sOO_X)
 \in\CAlg(\Sp(\mathcal X_{/U})).
\]

\begin{definition}\label{def:intrinsic-module}
The intrinsic module category over $U$ is
\begin{equation}\label{eq:M-U}
 \M(U):=
 \Mod_{\sOO_U}(\Sp(\mathcal X_{/U})).
\end{equation}

More generally, let
$V=\coprod_\alpha u_{V_\alpha}\in\mathcal X$ be any coproduct of open
subobjects, in particular a term of an open hypercover.  Pullback to the
slice is the left exact colimit-preserving functor
\[
 j_V^*\colon\mathcal X\longrightarrow\mathcal X_{/V},
 \qquad Y\longmapsto(Y\times V\to V).
\]
We use the same notation
\begin{equation}\label{eq:M-coproduct-object}
 \sOO_V:=\Sp(j_V^*)(\sOO_X),
 \qquad
 \M(V):=\Mod_{\sOO_V}\bigl(\Sp(\mathcal X_{/V})\bigr).
\end{equation}
For a morphism $W\to V$ between such objects, pullback between slices and
extension of the algebra action define the corresponding restriction
$\M(V)\to\M(W)$.
\end{definition}
Module objects over a commutative algebra object are defined in
\cite[Definition~4.5.1.1]{LurieHA}.  The category \eqref{eq:M-U} is
presentably symmetric monoidal by
\cite[Theorem~3.4.4.2]{LurieHA}.  It is stable by
\cite[Proposition~7.1.1.4]{LurieHA}: tensoring with $\sOO_U$ preserves
colimits, hence is exact in the stable ambient category.  Its tensor product
is $-\otimes_{\sOO_U}-$ and its unit is $\sOO_U$.

The definition is intrinsic: it uses the whole ring-spectrum sheaf
$\sOO_U$, not only the global-sections ring $\Gamma(U,\sOO_U)$.  No
affineness or global-sections monadicity is assumed.

\subsubsection{Objects, mapping spectra, and restriction}

An object of $\M(U)$ is an $\sOO_U$-module object internal to
$\Sp(\mathcal X_{/U})$.  Informally, it is a sheaf of spectra on $U$ with
a coherent action of the ring-spectrum sheaf $\sOO_U$.  The formal
definition remains Definition~\ref{def:intrinsic-module}.

For $L,M\in\M(U)$, the internal mapping object and ordinary mapping
spectrum are related by
\begin{equation}\label{eq:ordinary-vs-internal-map}
 \Map^{\Sp}_{\M(U)}(L,M)
 \simeq\Gamma\!\left(U,
 \underline{\operatorname{Hom}}_{\sOO_U}(L,M)\right).
\end{equation}
The closed structure exists because the relative tensor product preserves
colimits separately.  If $L$ is dualizable, the duality formalism of
\cite[Section~4.6.1]{LurieHA} gives
\begin{equation}\label{eq:dual-hom-formula}
 \underline{\operatorname{Hom}}_{\sOO_U}(L,M)
 \simeq L^\vee\otimes_{\sOO_U}M.
\end{equation}

For $V\subseteq U$, stabilization of $\rho_{U,V}$ and restriction of the
algebra action define a symmetric monoidal functor
\begin{equation}\label{eq:module-restriction}
 (-)|_V\colon\M(U)\longrightarrow\M(V).
\end{equation}
It sends $\sOO_U$ to $\sOO_V$, commutes with relative tensor products, and
preserves dual pairs.  Thus it preserves both dualizable and invertible
objects.

\subsubsection{Invertibility, duality, and finite cofibres}

If $L\otimes_{\sOO_U}M\simeq\sOO_U$, symmetry makes $M$ a two-sided dual
of $L$; hence invertible objects are dualizable.  In a stable symmetric
monoidal category whose tensor product is exact separately, dualizable
objects form a thick subcategory.  Indeed, they contain zero, are closed
under suspension and retracts, and are closed under finite cofibres: for a
cofiber sequence $L\to M\to C$ with $L,M$ dualizable, dualizing the first
map gives a fibre sequence $C^\vee\to M^\vee\to L^\vee$, and exactness of
tensoring verifies the triangle identities.  This argument will be used
for the cofiber of a candidate local equivalence in
Proposition~\ref{prop:Pic-stalk}.

\subsection{Picard spaces, units, and delooping}

For a symmetric monoidal $\infty$-category $\mathcal D^\otimes$, let
\[
 \Pic(\mathcal D):=(\mathcal D^{\mathrm{inv}})^{\simeq}
\]
be the maximal $\infty$-groupoid on its tensor-invertible objects.  It is a
grouplike $E_\infty$-space.  Define the presheaf
\begin{equation}\label{eq:Pic-presheaf}
 \Pic_{\sOO_X}(U):=\Pic(\M(U)).
\end{equation}
For an arbitrary $E_\infty$-ring $R$, $\GL_1(R)$ is its grouplike
$E_\infty$-space of units in the sense of
\cite[Definition~5.3.2.4]{LurieHA}.  Its underlying space is the union of
the components of $\Omega^\infty R$ indexed by $(\pi_0R)^\times$, and it
is canonically the automorphism space of the free rank-one $R$-module.

The internal unit object used in the main theorem is the presheaf of
spaces
\begin{equation}\label{eq:internal-GL1-definition}
 \GL_1(\sOO_X)(U)
 :=\Aut_{\M(U)}(\sOO_U).
\end{equation}
Restriction of modules makes this construction functorial in $U$.  Since
automorphisms of the tensor unit form the loop space of the unit component
of \eqref{eq:Pic-presheaf}, Picard hyperdescent below shows that
\eqref{eq:internal-GL1-definition} is a hypersheaf and hence an internal
grouplike $E_\infty$-object of $\mathcal X$.  Proposition~\ref{prop:Pic-stalk},
Step~3, proves the stalk identification
\begin{equation}\label{eq:internal-GL1-stalk}
 (\GL_1(\sOO_X))_x\simeq\GL_1(\sOO_{X,x}).
\end{equation}

The delooping of a group object is supplied by the group-object/connected-
pointed-object correspondence
\cite[Lemma~7.2.2.11]{LurieHTT}.  We will apply one delooping only after
specifying the multiplicative structure retained by the splitting.

\subsubsection{Units and automorphisms of rank one}

For connective $R$, the general unit construction just recalled gives
\begin{equation}\label{eq:GL1-homotopy}
 \pi_0\GL_1(R)=(\pi_0R)^\times,
 \qquad \pi_i\GL_1(R)=\pi_iR\quad(i\geq1).
\end{equation}
An $R$-linear endomorphism of $R$ is determined by the image of $1$, so
$\Map_{\Mod_R}(R,R)\simeq\Omega^\infty R$.  It is an equivalence exactly
when its component is a unit.  Consequently
\begin{equation}\label{eq:aut-free-rank-one}
 \Aut_{\Mod_R}(\Sigma^nR)\simeq\GL_1(R)
\end{equation}
for every integer $n$.

\subsubsection{The multiplicative precision needed later}

$\Pic(\mathcal D)$ is naturally a grouplike $E_\infty$-space.  The map
$n\mapsto\Sigma^n\mathbf1$ is multiplicative for addition and tensor
product, hence gives a grouplike $E_1$ map.  A refinement compatible with
the full symmetry is subtler because interchanging two odd suspensions
introduces the stable Koszul sign.  One delooping requires only the group
object structure, so the $E_1$ splitting proved below is sufficient; no
unstated $E_\infty$ refinement is used.

\section{Open hyperdescent}

\subsection{Slice and stabilization descent}

Let $U_\bullet\to U$ be an open hypercover, regarded as a hypercover in the
slice $\infty$-topos $\mathcal X_{/U}$.  It is effective because $\mathcal
X$ is hypercomplete:
\begin{equation}\label{eq:effective}
 \operatorname*{colim}_{[n]\in\Delta^{\mathrm{op}}}U_n\simeq U;
\end{equation}
see \cite[Theorem~6.5.3.12]{LurieHTT}.  The slice construction sends this
colimit to a limit
\begin{equation}\label{eq:slice-descent}
 \mathcal X_{/U}\simeq
 \operatorname*{lim}_{[n]\in\Delta}\mathcal X_{/U_n}.
\end{equation}
This is the present situation of
\cite[Theorem~6.1.3.9(3) and Proposition~6.1.3.10]{LurieHTT}.

At this point a categorical-level issue must be settled.  Equation
\eqref{eq:slice-descent} is often recorded in a presentable-category
formalism, whereas the loop-tower model for spectra is a limit in
$\CatInf$.  An arbitrary limit in $\PrL$ cannot be silently treated as the
corresponding limit of underlying $\infty$-categories.  The following
proposition supplies the missing verification for this particular slice
diagram.

\begin{proposition}[The slice limit is an underlying categorical limit]
\label{prop:slice-Cat-limit}
The canonical comparison
\begin{equation}\label{eq:slice-Cat-limit}
 \mathcal X_{/U}\longrightarrow
 \operatorname*{lim}_{[n]\in\Delta}^{\CatInf}\mathcal X_{/U_n}
\end{equation}
is an equivalence.  Under this equivalence, finite limits, terminal
objects, pointed-object constructions, and loop functors are computed
levelwise.
\end{proposition}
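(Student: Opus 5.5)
The plan is to obtain the $\CatInf$-limit directly from the descent theorem for $\infty$-topoi, and to recognize \eqref{eq:slice-descent} as a statement about that limit. By \cite[Theorem~6.1.3.9(3)]{LurieHTT}, the functor $\mathcal X^{\mathrm{op}}\to\widehat{\CatInf}$, $V\mapsto\mathcal X_{/V}$, with pullback functoriality, sends small colimits in $\mathcal X$ to limits in $\widehat{\CatInf}$. This is the large $\infty$-category of (not necessarily presentable) $\infty$-categories. The key point is that Lurie's statement is already phrased in $\widehat{\CatInf}$, not in $\PrL$. Applying it to the effective colimit \eqref{eq:effective}, which holds by hypercompleteness, gives that \eqref{eq:slice-Cat-limit} is an equivalence.

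I also intend to record the compatibility with $\PrL$. The transition functors $\rho$ are pullbacks and have right adjoints, so they are morphisms in $\PrL$. The inclusion $\PrL\to\widehat{\CatInf}$ preserves small limits \cite[Proposition~5.5.3.13]{LurieHTT}. Hence the $\PrL$-limit of \eqref{eq:slice-descent} and the $\CatInf$-limit agree, and no separate identification is hidden.

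For the second assertion, I would use the standard description of limits in $\CatInf$. If each transition functor preserves a class of limits, then the limit category admits those limits, and the projections preserve and jointly detect them. This is \cite[Proposition~5.4.7.11]{LurieHTT}, or the analogous statement for finite limits. Each $\rho$ is a pullback functor in a topos and therefore preserves finite limits, and in particular terminal objects. So finite limits in $\mathcal X_{/U}$ are computed levelwise, and the terminal object $u_U$ corresponds to the compatible family $(u_{U_n})$.

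Pointed objects are then handled as follows. $(\mathcal D)_*$ is the full subcategory of $\Fun(\Delta^1,\mathcal D)$ on arrows with source terminal. $\Fun(\Delta^1,-)$ commutes with limits, and the source-terminal condition is detected levelwise because terminal objects are. Hence $(\mathcal X_{/U})_*\simeq\lim(\mathcal X_{/U_n})_*$. The loop functor $\Omega$ is the pullback $*\times_F *$, a finite limit, so it too is computed levelwise, compatibly with the equivalence.

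The main obstacle is purely bookkeeping: the levels $U_n$ are coproducts of opens rather than subterminals. One must check that $j_V^*$ and the maps $\mathcal X_{/V}\to\mathcal X_{/W}$ from Definition~\ref{def:intrinsic-module} form the functor to which Theorem~6.1.3.9 applies, and not merely the open-subterminal restriction $\rho_{U,V}$. My plan is to take the slice functor on all of $\mathcal X$ from the start. The open case is then a special case, and the coherence issue disappears.
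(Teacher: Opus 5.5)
Your proposal is correct and takes essentially the paper's route. The paper's gluing of objects and mapping spaces is an unpacking of the same $\widehat{\CatInf}$-valued descent statement in \cite[Theorem~6.1.3.9]{LurieHTT} that you cite directly, and your levelwise computation of terminal objects, pointed objects and $\Omega$ (via left exactness of the pullback transition functors) matches the paper's. Your extra appeal to \cite[Proposition~5.5.3.13]{LurieHTT} is also correct: the inclusion $\PrL\subseteq\widehat{\CatInf}$ does preserve small limits, so the $\PrL$-versus-$\CatInf$ issue that the paper treats as a genuine obstacle is in fact automatic here.
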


\begin{proof}
We spell out the descent data represented by the target.  An object in the
right side consists of objects $F_n\to U_n$ together with coherent
equivalences between every pullback of $F_n$ along a simplicial operator
and the corresponding $F_m$.  The higher coherences are precisely those
encoded by the limit in $\CatInf$.  A morphism is a compatible family of
morphisms over all $U_n$, again with the full coherent data.

Pullback sends $F\to U$ to the family
$F\times_UU_n\to U_n$ and hence defines \eqref{eq:slice-Cat-limit}.
Because the augmentation is effective by \eqref{eq:effective}, descent in
the $\infty$-topos glues the compatible family $F_\bullet$ to an object
$F\to U$.  The same descent statement applied to mapping spaces glues
compatible morphisms and all their homotopies.  Consequently the comparison
is essentially surjective and fully faithful.  This is the underlying
$\infty$-categorical content of the slice descent results
\cite[Theorem~6.1.3.9(3) and Proposition~6.1.3.10]{LurieHTT}.

Limits in a limit of $\infty$-categories are computed levelwise whenever
the transition functors preserve them.  Here the transition functors are
pullbacks between slices, hence left exact.  In particular the terminal
object is the compatible family of terminal objects.  Pointed objects are
the undercategory of the terminal object, or equivalently pairs
$(F,*\to F)$; both the object and the section are therefore computed
levelwise.  Finally $\Omega F$ in a pointed category is the pullback
$*\times_F*$, so it too is computed levelwise.
\end{proof}

\begin{remark}[Why a $\PrL$ slogan is insufficient]
Proposition~\ref{prop:slice-Cat-limit} is not a general assertion that the
forgetful functor $\PrL\to\CatInf$ preserves all limits.  It is a direct
verification for a diagram of slices along an effective hypercover.  This
is exactly the extra hypothesis needed before interchanging the
cosimplicial limit with the stabilization tower.
\end{remark}

For a not necessarily pointed $\infty$-category $\mathcal C$ with finite
limits, \cite[Remark~1.4.2.25]{LurieHA} gives
\begin{equation}\label{eq:nonpointed-loop-tower}
 \Sp(\mathcal C)\simeq
 \operatorname*{lim}_{k\in\mathbb N}
 \left(\cdots\xrightarrow{\Omega}\mathcal C_*
 \xrightarrow{\Omega}\mathcal C_*\right).
\end{equation}
The cited remark is obtained from
\cite[Remark~1.4.2.18 and Proposition~1.4.2.24]{LurieHA}; the remark, not
the proposition alone, is needed because $\mathcal X_{/U}$ need not be
pointed.

Apply \eqref{eq:nonpointed-loop-tower} to $\mathcal X_{/U}$.  By
Proposition~\ref{prop:slice-Cat-limit}, pointed objects and loops are
levelwise.  Limits commute with limits in $\CatInf$, so
\begin{align}
 \Sp(\mathcal X_{/U})
 &\simeq
 \operatorname*{lim}_{k\in\mathbb N}
       (\mathcal X_{/U})_*^{(k)} \notag\\
 &\simeq
 \operatorname*{lim}_{k\in\mathbb N}
 \operatorname*{lim}_{[n]\in\Delta}
       (\mathcal X_{/U_n})_*^{(k)} \notag\\
 &\simeq
 \operatorname*{lim}_{[n]\in\Delta}
 \operatorname*{lim}_{k\in\mathbb N}
       (\mathcal X_{/U_n})_*^{(k)} \notag\\
 &\simeq
 \operatorname*{lim}_{[n]\in\Delta}\Sp(\mathcal X_{/U_n}).
 \label{eq:Sp-descent-expanded}
\end{align}
Here $(\mathcal X_{/U})_*^{(k)}$ denotes the $k$-th term of the loop tower;
the notation is only used to display the two independent limits.
Thus we have obtained
\begin{equation}\label{eq:Sp-descent}
 \Sp(\mathcal X_{/U})\simeq
 \operatorname*{lim}_{[n]\in\Delta}\Sp(\mathcal X_{/U_n}).
\end{equation}

\subsubsection{Symmetric monoidal refinement and the coefficient algebra}

Equation \eqref{eq:Sp-descent} is initially an equivalence in $\CatInf$.
The module-limit lemma requires a stronger statement in the total category
of presentable symmetric monoidal categories equipped with a commutative
algebra.  We now verify its universal property rather than inferring it
only from compatibility of the restrictions.

\begin{proposition}[The monoidal category--algebra cone is limiting]
\label{prop:monoidal-pair-limit}
For an open hypercover $U_\bullet\to U$, the restriction cone exhibits
\begin{equation}\label{eq:monoidal-algebra-limit}
 (\Sp(\mathcal X_{/U})^\otimes,\sOO_U)
 \simeq\operatorname*{lim}_{[n]\in\Delta}
 (\Sp(\mathcal X_{/U_n})^\otimes,\sOO_{U_n})
\end{equation}
in the total $\infty$-category of presentable symmetric monoidal
$\infty$-categories and commutative algebra objects.
\end{proposition}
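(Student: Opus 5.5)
The proof reduces the statement to three separate checks. The target category is the total $\infty$-category $\mathcal T$ of pairs $(\mathcal C^\otimes,A)$, with $\mathcal C^\otimes\in\CAlg(\PrL)$ and $A\in\CAlg(\mathcal C)$. We regard $\mathcal T$ as the domain of the cocartesian fibration $p\colon\mathcal T\to\CAlg(\PrL)$ classified by $\mathcal C\mapsto\CAlg(\mathcal C)$, functorial along symmetric monoidal left adjoints. Limits in such a total category are computed in two stages. First one forms the limit $\mathcal C_\infty:=\lim\mathcal C_n$ of the base diagram. Then one takes the limit in the fibre $\CAlg(\mathcal C_\infty)$ of the objects obtained by pulling the $A_n$ back along the right adjoints of the projections $\mathcal C_\infty\to\mathcal C_n$. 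Equivalently, a cone with apex $(\mathcal C,A)$ is limiting precisely when two conditions hold: its image in the base is limiting, and the induced map $A\to\lim_n G_n(A_n)$ in $\CAlg(\mathcal C)$ is an equivalence, where $G_n$ denotes the lax monoidal right adjoint of restriction. So the plan is to verify (a) the base statement and (b) this fibrewise statement.

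For (a), I would first show that $\CAlg(\PrL)\to\PrL\to\CatInf$ creates the relevant limit in this particular case. In $\CAlg(\PrL)$, limits are created by the forgetful functor to $\PrL$, since the $\PrL$ tensor product is compatible in each variable. By itself this does not identify the underlying category, which is exactly the concern raised in the remark after Proposition~\ref{prop:slice-Cat-limit}. However, for diagrams of left adjoints, the limit in $\PrL$ is computed in $\CatInf$. Here the transition functors $\Sp(\rho)$ are left adjoints (they are stabilizations of pullbacks, which preserve colimits and finite limits). By \eqref{eq:Sp-descent} the $\CatInf$-limit is $\Sp(\mathcal X_{/U})$, and the comparison functor is induced by the same restriction functors, so the cone is limiting in $\PrL$. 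For the symmetric monoidal structure, I would use that $\CAlg(\CatInf)\to\CatInf$ creates limits, with tensor products computed levelwise. The restriction functors $\Sp(j^*)$ are symmetric monoidal by the construction preceding Definition~\ref{def:intrinsic-module}, since they are tensor products in $\PrL$ of $j^*$ with $\Sp$. Hence the levelwise tensor product on $\lim_n\Sp(\mathcal X_{/U_n})$ agrees with the smash product on $\Sp(\mathcal X_{/U})$ transported along \eqref{eq:Sp-descent}.

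For (b), $\CAlg(\lim_n\mathcal C_n)\simeq\lim_n\CAlg(\mathcal C_n)$ again holds because $\CAlg(-)$ preserves limits of symmetric monoidal $\infty$-categories. Under this identification, a commutative algebra in the limit is nothing more than a compatible family of algebras. The family $(\sOO_{U_n})_n$, together with the equivalences $\Sp(\rho)\sOO_{U_n}\simeq\sOO_{U_m}$ coming from $\sOO_{U_n}=\Sp(j_{U_n}^*)\sOO_X$ and the coherent composites $\rho\circ j^*\simeq j^*$, is carried to exactly $\sOO_U$. The reason is that restricting $\sOO_U=\Sp(j_U^*)\sOO_X$ along the cone produces the same family. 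Consequently the unit map $\sOO_U\to\lim_n G_n\sOO_{U_n}$ is an equivalence, since it is the fibrewise limit computed through the equivalence in (a).

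The main obstacle is the coherence underlying (b). We need the equivalences $\rho_{U_n,U_m}\circ j_{U_n}^*\simeq j_{U_m}^*$ to assemble into an honest functor $\Delta_+\to\mathcal T$, not merely a family of objectwise equivalences. I would handle this by defining the whole augmented diagram at once as the composite of two functors. The first is the functor $\mathcal X^{\mathrm{op}}\to\CAlg(\PrL)$, $V\mapsto\Sp(\mathcal X_{/V})$, obtained by straightening the codomain fibration and tensoring with $\Sp$. The second is the section picking out the pullback of $\sOO_X$ from the terminal object. Restricting this composite along $U_\bullet^{\mathrm{op}}$ makes all coherences automatic. The argument therefore only has to check the two limit conditions (a) and (b).
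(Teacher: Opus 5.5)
Your proposal is correct and takes essentially the paper's route. You upgrade the underlying $\CatInf$-limit \eqref{eq:Sp-descent} to a limit in $\PrL$ and then in $\CAlg(\PrL)$, and use $\CAlg(\lim_n\mathcal D_n)\simeq\lim_n\CAlg(\mathcal D_n)$ to match $\sOO_U$ with the Cartesian family $(\sOO_{U_n})_n$.

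The differences are in packaging:
\begin{itemize}
\item You quote the general fact that $\PrL\to\CatInf$ preserves limits, where the paper verifies it by hand for this diagram.
\item You close with the fibrewise criterion $A\simeq\lim_n G_n(A_n)$ instead of a direct mapping-space check.
\item You make the coherence of the augmented diagram explicit, which the paper leaves implicit.
\end{itemize}
One correction: limits in $\CAlg(\PrL)$ are created by the forgetful functor for any symmetric monoidal structure, so compatibility of the tensor product with colimits is not the reason.
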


\begin{proof}
Put $\mathcal D=\Sp(\mathcal X_{/U})$ and
$\mathcal D_n=\Sp(\mathcal X_{/U_n})$.

\medskip\noindent
\emph{Step 1: the $\CatInf$ limit is also a $\PrL$ limit.}
Every transition functor $\mathcal D_n\to\mathcal D_m$ is obtained by
stabilizing an inverse-image functor between slices.  It preserves all
small colimits.  The limit category in \eqref{eq:Sp-descent} computes
colimits componentwise: for a diagram $K\to\mathcal D$, project to every
$\mathcal D_n$, take the colimit there, and use colimit preservation of all
transition functors to obtain the required compatible family.  The
componentwise universal property of these colimits gives the universal
property in $\mathcal D$.

Let $\mathcal E$ be presentable.  A functor
$F\colon\mathcal E\to\mathcal D$ is the same as a compatible family
$F_n\colon\mathcal E\to\mathcal D_n$.  Since colimits in $\mathcal D$ are
componentwise, $F$ preserves colimits if and only if every $F_n$ does.
Consequently
\begin{equation}\label{eq:PrL-mapping-test}
 \Fun^L(\mathcal E,\mathcal D)
 \simeq\operatorname*{lim}_{[n]\in\Delta}
 \Fun^L(\mathcal E,\mathcal D_n).
\end{equation}
Equation \eqref{eq:PrL-mapping-test}, for every presentable $\mathcal E$,
is the mapping universal property of the limit in $\PrL$.  Thus the
specific underlying $\CatInf$ limit constructed above is also the required
$\PrL$ limit.  This is a property of the present diagram; no general claim
that $\PrL\to\CatInf$ preserves arbitrary limits is being made.

\medskip\noindent
\emph{Step 2: symmetric monoidal refinement.}
Write $\CAlg(\PrL)$ for the $\infty$-category of commutative algebra
objects in the symmetric monoidal category $\PrL$; its objects are
presentable symmetric monoidal $\infty$-categories whose tensor products
preserve colimits separately.  The stabilized inverse-image functors are
colimit-preserving symmetric monoidal functors, so the hypercover diagram
and its restriction cone are a diagram and a cone in $\CAlg(\PrL)$.

Limits of algebra objects are created on their underlying objects: this is
the commutative-operad application of the operadic relative-limit theorem
\cite[Proposition~3.2.2.1 and Corollary~3.2.2.4]{LurieHA}.  Consequently,
the underlying $\PrL$-limit established in Step~1 carries the unique
commutative algebra structure for which
\begin{equation}\label{eq:CAlg-PrL-limit}
 \mathcal D^\otimes\simeq
 \operatorname*{lim}_{[n]\in\Delta}\mathcal D_n^\otimes
 \qquad\text{in }\CAlg(\PrL).
\end{equation}
The identification of this limiting structure with the usual stabilized
tensor product follows from
$\Sp(\mathcal X_{/V})\simeq\mathcal X_{/V}\otimes\Sp$ and
\cite[Proposition~4.8.1.15, Example~4.8.1.23, and
Corollary~4.8.2.19]{LurieHA}.  Thus associativity, symmetry, the unit, and
all higher operadic coherences in Step~2 are supplied by the limit in
$\CAlg(\PrL)$, rather than by a separate componentwise coherence claim.

\medskip\noindent
\emph{Step 3: add the commutative algebra object.}
Restriction gives a Cartesian section
$[n]\mapsto\sOO_{U_n}\in\CAlg(\mathcal D_n)$.  Hypersheaf descent for
$\sOO_X$ identifies its underlying glued object with $\sOO_U$; all
multiplication, unit, and higher commutativity maps are compatible because
restriction is symmetric monoidal.  Applying the relative-limit theorem
for algebra objects \cite[Proposition~3.2.2.1]{LurieHA} to the monoidal
limit from Step~2 gives
\begin{equation}\label{eq:CAlg-relative-limit-here}
 \CAlg(\mathcal D)
 \simeq\operatorname*{lim}_{[n]\in\Delta}\CAlg(\mathcal D_n),
 \qquad
 \sOO_U\longleftrightarrow(\sOO_{U_n})_n.
\end{equation}

Finally, a morphism into the pair $(\mathcal D^\otimes,\sOO_U)$ consists
of a morphism into $\mathcal D^\otimes$ together with the corresponding
map of commutative algebra objects.  Step~2 and
\eqref{eq:CAlg-relative-limit-here} identify both pieces, including their
compatibility, with a compatible family of morphisms into
$(\mathcal D_n^\otimes,\sOO_{U_n})$.  This is the total-category universal
property and proves \eqref{eq:monoidal-algebra-limit}.
\end{proof}

The strengthened equivalence \eqref{eq:monoidal-algebra-limit}, rather
than only the underlying equivalence \eqref{eq:Sp-descent}, is the input to
the module-limit lemma.

\subsection{A relative-limit lemma for modules}

The phrase ``modules commute with limits'' hides two pieces of structure:
the ambient monoidal categories vary, and the coefficient algebra varies
with them.  We therefore work in a total category rather than applying a
fixed functor $\Mod_A(-)$ to an ill-typed diagram.

Let $\CAlgMod(\mathcal D)$ denote the $\infty$-category whose objects are
pairs $(B,N)$ with $B\in\CAlg(\mathcal D)$ and
$N\in\Mod_B(\mathcal D)$.  There is a projection
\begin{equation}\label{eq:calgmod-projection}
 p_{\mathcal D}\colon\CAlgMod(\mathcal D)\longrightarrow\CAlg(\mathcal D),
 \qquad(B,N)\longmapsto B.
\end{equation}
The fibre over $B$ is exactly $\Mod_B(\mathcal D)$.  Functoriality in a
symmetric monoidal functor $F\colon\mathcal D\to\mathcal E$ sends
$(B,N)$ to $(F(B),F(N))$ with the transported action.

\begin{lemma}[Limits of module categories]\label{lem:module-limits}
Let $I$ be small and suppose
\[
 (\mathcal D^\otimes,A)\simeq
 \operatorname*{lim}_{i\in I}(\mathcal D_i^\otimes,A_i)
\]
in the total $\infty$-category of symmetric monoidal $\infty$-categories
with a commutative algebra object.  Then restriction induces an equivalence
\[
 \Mod_A(\mathcal D)\simeq
 \operatorname*{lim}_{i\in I}\Mod_{A_i}(\mathcal D_i).
\]
\end{lemma}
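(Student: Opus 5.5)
The plan is to realize module categories as fibres of a functor between total categories and to deduce the result from two relative-limit statements, one for pairs and one for algebras. Let $\mathcal T$ be the total $\infty$-category of pairs $(\mathcal D^\otimes,A)$, and let $\mathcal T^{\mathrm{mod}}$ be the total category of triples $(\mathcal D^\otimes,A,N)$ with $N\in\Mod_A(\mathcal D)$. Equivalently, $\mathcal T^{\mathrm{mod}}$ classifies the assignment $\mathcal D^\otimes\mapsto\CAlgMod(\mathcal D)$ over symmetric monoidal categories. There is a forgetful functor $q\colon\mathcal T^{\mathrm{mod}}\to\mathcal T$, and its fibre over $(\mathcal D^\otimes,A)$ is $\Mod_A(\mathcal D)$ via \eqref{eq:calgmod-projection}. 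The argument then has three steps.

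\textbf{Step 1 (algebra--module pairs are algebras over an operad).}
Use the $\infty$-operad $\mathcal{M}\mathrm{od}^{\mathcal Comm}$ of \cite[Section~4.5.1]{LurieHA}. For each symmetric monoidal $\mathcal D^\otimes$ this gives
\[
\CAlgMod(\mathcal D)\simeq\operatorname{Alg}_{\mathcal{M}\mathrm{od}^{\mathcal Comm}}(\mathcal D),
\]
naturally in lax symmetric monoidal functors. Since $\operatorname{Alg}_{\mathcal O}(-)$ preserves limits of symmetric monoidal $\infty$-categories, \cite[Proposition~3.2.2.1 and Corollary~3.2.2.4]{LurieHA} gives
\[
\CAlgMod(\mathcal D)\simeq\lim_i\CAlgMod(\mathcal D_i)
\qquad\text{and}\qquad
\CAlg(\mathcal D)\simeq\lim_i\CAlg(\mathcal D_i).
\]
Here the underlying limit $\mathcal D^\otimes\simeq\lim_i\mathcal D_i^\otimes$ is extracted from the hypothesis by projecting the total limit to its first component. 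This projection preserves the limit because the total category is the Grothendieck construction of $\mathcal D\mapsto\CAlg(\mathcal D)$, and that functor preserves limits.

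\textbf{Step 2 (limits commute with fibres).}
The projections $p_{\mathcal D_i}$ are compatible and assemble into a map
\[
p_{\mathcal D}\simeq\lim_i p_{\mathcal D_i}
\]
of limit diagrams in $\CatInf$. Fibres are pullbacks, and limits commute with pullbacks in $\CatInf$. Taking the fibre over the point $A\leftrightarrow(A_i)_i$ of $\CAlg(\mathcal D)\simeq\lim_i\CAlg(\mathcal D_i)$ therefore gives
\[
\Mod_A(\mathcal D)\simeq\{A\}\times_{\CAlg(\mathcal D)}\CAlgMod(\mathcal D)\simeq\lim_i\bigl(\{A_i\}\times_{\CAlg(\mathcal D_i)}\CAlgMod(\mathcal D_i)\bigr)\simeq\lim_i\Mod_{A_i}(\mathcal D_i).
\]
The transition functors on the right are $N\mapsto F(N)$ with the transported action. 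The hypothesis that $A$ is the limit point, i.e.\ that the cone identifies $F_{ij}(A_i)\simeq A_j$ coherently, is exactly what makes the pointwise fibres form a diagram.

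\textbf{Step 3 (identify the functor).}
Check that the resulting equivalence is induced by restriction. This holds because every identification above was constructed from the cone maps. The main obstacle is Step~1: making precise that $\CAlgMod$ is functorial in a way that preserves limits of symmetric monoidal categories, rather than merely of underlying categories, and checking that the limit in the total category projects correctly. My first attempt would be to present everything as $\infty$-operad maps into $\mathcal D^\otimes$. A limit of symmetric monoidal categories is computed as a limit of the corresponding coCartesian fibrations over $\mathrm{Fin}_*$. Mapping spaces $\operatorname{Alg}_{\mathcal O}(-)=\Fun_{\mathrm{Fin}_*}^{\mathrm{inert}}(\mathcal O^\otimes,-)$ then visibly commute with such limits, and passing to underlying $\infty$-categories of $\mathcal O$-algebras gives the required commutation.
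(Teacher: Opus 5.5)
Your proposal is correct and follows essentially the same route as the paper: both write $\Mod_A(\mathcal D)$ as the fibre of $\CAlgMod(\mathcal D)\to\CAlg(\mathcal D)$ over $A$, show that $\CAlg(-)$ and $\CAlgMod(-)$ carry the given limit to limits in $\CatInf$, and then commute the fibre over the compatible point $(A_i)_i$ with the limit.

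Your Step~1 justifies the key limit statement more explicitly than the paper does. The paper cites Lurie's Proposition~3.2.2.1 and Theorem~3.4.3.1, flags them itself as ``applications'' rather than verbatim statements, and the former is about limits inside a fixed algebra category; your description of algebra--module pairs as inert-preserving functors over $\mathrm{Fin}_*$ is what actually makes the commutation with limits of varying symmetric monoidal categories visible. One caveat: projecting the total limit to its first component preserves the limit here only because the diagram consists of coCartesian edges, i.e.\ it relies on the identifications $F_{ij}(A_i)\simeq A_j$ that you invoke in Step~2, and not for arbitrary diagrams in the lax total category.
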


\begin{proof}
Let $\CAlgMod(\mathcal D)$ be the total $\infty$-category of pairs $(B,N)$
with $B\in\CAlg(\mathcal D)$ and $N\in\Mod_B(\mathcal D)$.  Applying the
relative-limit criterion for algebra objects
\cite[Proposition~3.2.2.1]{LurieHA} and the coherent-operad module theorem
\cite[Theorem~3.4.3.1]{LurieHA} to the corresponding total operadic
fibrations gives, in this setting,
\[
 \CAlg(\mathcal D)\simeq\operatorname*{lim}_i\CAlg(\mathcal D_i),
 \qquad
 \CAlgMod(\mathcal D)\simeq
 \operatorname*{lim}_i\CAlgMod(\mathcal D_i).
\]
These two equivalences are applications of the cited results, not their
verbatim statements.  Since a fixed module category is the fibre over its
algebra object, pullbacks commute with limits and
\begin{align*}
 \Mod_A(\mathcal D)
 &\simeq
 \CAlgMod(\mathcal D)\times_{\CAlg(\mathcal D)}\{A\}\\
 &\simeq
 \left(\operatorname*{lim}_i\CAlgMod(\mathcal D_i)\right)
 \times_{\operatorname*{lim}_i\CAlg(\mathcal D_i)}\{(A_i)_i\}\\
 &\simeq
 \operatorname*{lim}_i
 \left(\CAlgMod(\mathcal D_i)
 \times_{\CAlg(\mathcal D_i)}\{A_i\}\right)\\
 &\simeq
 \operatorname*{lim}_i\Mod_{A_i}(\mathcal D_i).\qedhere
\end{align*}
\end{proof}

\begin{remark}[Where the hypotheses enter]
The first cited result controls limits of algebra objects relative to an
operadic fibration; the second controls module objects for a coherent
operad.  The hypothesis that the displayed limit is taken in the total
category of monoidal categories \emph{with} a compatible algebra ensures
that the point $\{A\}$ in the pullback corresponds to the coherent family
of points $\{A_i\}$.  Without this compatibility, the middle fibre product
in the proof would not be defined.
\end{remark}

Applying Lemma~\ref{lem:module-limits} to
\eqref{eq:monoidal-algebra-limit} yields
\begin{equation}\label{eq:M-descent}
 \M(U)\simeq
 \operatorname*{lim}_{[n]\in\Delta}\M(U_n).
\end{equation}
Thus $U\mapsto\M(U)$ is a hyperstack of presentable symmetric monoidal
stable $\infty$-categories.

More explicitly, an object of the right side of \eqref{eq:M-descent} is a
family of $\sOO_{U_n}$-modules $L_n$ and coherent identifications between
their pullbacks along every map in $\Delta$.  Lemma
\ref{lem:module-limits} asserts both effectivity for objects and descent for
all mapping spaces.  The glued object is an internal $\sOO_U$-module; it is
not obtained by first taking a module over the ordinary ring
$\Gamma(U,\sOO_U)$.

\subsubsection{What the module descent equivalence says on each level}

Equation \eqref{eq:M-descent} contains the following four assertions.
\begin{enumerate}[label=\textup{(\arabic*)},leftmargin=3em]
\item \emph{Objects.}  Every coherently Cartesian family
$\{L_n\in\M(U_n)\}$ is the restriction of an $L\in\M(U)$, unique up to a
contractible space of choices.
\item \emph{Mapping spaces.}  For $L,M\in\M(U)$, restriction induces
\begin{equation}\label{eq:module-map-descent}
 \Map_{\M(U)}(L,M)
 \simeq\operatorname*{lim}_{[n]\in\Delta}
 \Map_{\M(U_n)}(L|_{U_n},M|_{U_n}).
\end{equation}
Thus morphisms, homotopies between morphisms, and every higher homotopy all
satisfy descent.
\item \emph{Tensor products and the unit.}  The equivalence sends
$L\otimes_{\sOO_U}M$ to the family
$L|_{U_n}\otimes_{\sOO_{U_n}}M|_{U_n}$ and sends $\sOO_U$ to the family
$\sOO_{U_n}$.  Associativity, symmetry, and unit homotopies are the glued
componentwise coherences.
\item \emph{Stable structure.}  Zero objects, suspensions, fibres, and
cofibres are computed componentwise.  Hence a morphism in $\M(U)$ is an
equivalence precisely when all its restrictions in the descent diagram are
equivalences.
\end{enumerate}
These assertions follow simultaneously from the total operadic limit and
the fibre calculation in Lemma~\ref{lem:module-limits}; they are not four
additional descent assumptions.

\subsection{Picard hyperdescent}

We first give the limit argument directly.  In a limit of symmetric
monoidal $\infty$-categories, an object is tensor-invertible if and only if
each of its components is tensor-invertible and the inverses form a
compatible family.  Indeed, if $L=(L_i)$ is invertible, every projection
preserves its inverse.  Conversely, take componentwise inverses $L_i^{-1}$;
uniqueness of inverses supplies the coherences, and the unit equivalences
$L_i\otimes L_i^{-1}\simeq\mathbf1_i$ assemble to a unit equivalence in
the limit.  Passing to maximal $\infty$-groupoids also preserves limits.
Consequently
\begin{equation}\label{eq:Pic-preserves-limit}
 \Pic\!\left(\operatorname*{lim}_i\mathcal D_i\right)
 \simeq\operatorname*{lim}_i\Pic(\mathcal D_i).
\end{equation}
This is also the mechanism used in the Brauer-sheaf construction of
\cite[Section~4, in particular Proposition~4.3]{AntieauGepner}.
Applying \eqref{eq:Pic-preserves-limit} to \eqref{eq:M-descent} gives
\begin{equation}\label{eq:Pic-descent}
 \Pic_{\sOO_X}(U)\simeq
 \operatorname*{lim}_{[n]\in\Delta}\Pic_{\sOO_X}(U_n).
\end{equation}
In particular, \eqref{eq:Pic-presheaf} is a hypersheaf of grouplike
$E_\infty$-spaces.

The last sentence has two contents.  The equivalence
\eqref{eq:Pic-descent} proves hyperdescent of the underlying spaces.  The
tensor products and their coherences are inherited from the symmetric
monoidal module categories, so all restriction maps are maps of grouplike
$E_\infty$-spaces.  Therefore the result is a group object in the
hypercomplete topos, not merely a sheaf of sets of invertible equivalence
classes.

\section{The stalk of the intrinsic Picard hypersheaf}

\subsection{The spectral stalk is connective and local}

For $x\in|X|$, equations \eqref{eq:alg-filtered} and
\eqref{eq:sp-filtered} give
\begin{align}
 \sOO_{X,x}
 &\simeq \operatorname*{colim}_{x\in U}\sOO_X(U)\notag\\
 &\simeq \operatorname*{colim}_{x\in U}
     \sOO_X^{\mathrm{pre}}(U)\notag\\
 &=\operatorname*{colim}_{x\in U}
     \Phi\Psi\bigl(\OO_X(U)^{\mathrm{alg}}\bigr)\notag\\
 &\simeq \Phi\Psi\left(
     \operatorname*{colim}_{x\in U}\OO_X(U)^{\mathrm{alg}}
   \right)\notag\\
 &\simeq \Phi\Psi(\OO_{X,x}^{\mathrm{alg}}).
 \label{eq:spectral-stalk}
\end{align}
The first line is the neighbourhood formula for the stalk.  The second
uses that the presheaf-to-hypersheaf unit
$\sOO_X^{\mathrm{pre}}\to\sOO_X$ is a stalk equivalence; it does
\emph{not} assert that their sections agree on each open set.  The fourth
line uses filtered-colimit preservation by the equivalences $\Phi$ and
$\Psi$.  The last line uses filtered-colimit preservation of
$(-)^{\mathrm{alg}}$ from \cite[Remark~1.1.91]{Steffens} and the definition
$\OO_{X,x}=\operatorname*{colim}_{x\in U}\OO_X(U)$.
The comparison is compatible with homology and homotopy, hence
\[
 \pi_0(\sOO_{X,x})\cong\pi_0(\OO_{X,x}^{\mathrm{alg}}).
\]
The ring on the right is local by
\cite[Definitions~5.10 and 6.3]{Spivak}, and filtered colimits of connective
$E_\infty$-rings remain connective.  Thus $\sOO_{X,x}$ is a connective
local $E_\infty$-ring.

\subsection{Comparison of Picard stalks}

Fix $x\in U$.  The point $x$ defines a geometric morphism from spaces to
the slice topos $\mathcal X_{/U}$.  Its inverse-image functor is left exact
and colimit preserving.  Stabilization therefore gives an exact
colimit-preserving symmetric monoidal functor
\[
 x^*\colon\Sp(\mathcal X_{/U})\longrightarrow\Sp.
\]
It carries the coefficient algebra $\sOO_U$ to its stalk
$\sOO_{X,x}$.  Base change of module objects consequently gives
\begin{equation}\label{eq:module-stalk-functor}
 x^*\colon\M(U)\longrightarrow\Mod_{\sOO_{X,x}},
 \qquad L\longmapsto L_x.
\end{equation}
Because \eqref{eq:module-stalk-functor} is symmetric monoidal, it sends a
chosen inverse $L^{-1}$ to an inverse of $L_x$ and hence preserves
invertible modules.  It also preserves duals, evaluation maps, and
coevaluation maps.  These facts justify the Picard-level map below; they do
not assert that arbitrary internal Hom objects commute with stalks.  The
latter compatibility is proved under dualizability in Lemma
\ref{lem:dualizable-map-stalk}.

Taking stalks is symmetric monoidal and gives a natural map
\begin{equation}\label{eq:theta}
 \theta_x\colon
 (\Pic_{\sOO_X})_x
 =\operatorname*{colim}_{x\in U}\Pic_{\sOO_X}(U)
 \longrightarrow\Pic(\sOO_{X,x}).
\end{equation}
The formula for the left-hand stalk is the neighbourhood-colimit formula
\cite[Remark~6.5.2.2]{LurieHTT}.

The proof that $\theta_x$ is an equivalence involves more than comparing
sets of components.  We isolate the required continuity statements first.

\begin{lemma}[Mapping out of a dualizable module commutes with the stalk]
\label{lem:dualizable-map-stalk}
Let $L,M\in\M(U)$ and suppose $L$ is dualizable.  Then for every $x\in U$
the natural map
\begin{equation}\label{eq:map-stalk-lemma}
 \operatorname*{colim}_{x\in V\subseteq U}
 \Map^{\Sp}_{\M(V)}(L|_V,M|_V)
 \longrightarrow
 \Map^{\Sp}_{\Mod_{\sOO_{X,x}}}(L_x,M_x)
\end{equation}
is an equivalence of spectra.
\end{lemma}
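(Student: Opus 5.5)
The plan is to use dualizability to turn the mapping spectrum into global sections of a module, and then to use the fact that global sections of a sheaf, taken over shrinking neighbourhoods, have the stalk as their colimit.

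\emph{Step 1 (rewrite the source).} By \eqref{eq:ordinary-vs-internal-map} and \eqref{eq:dual-hom-formula},
\[
 \Map^{\Sp}_{\M(V)}(L|_V,M|_V)\simeq\Gamma\bigl(V,(L^\vee\otimes_{\sOO_U}M)|_V\bigr).
\]
This uses that restriction \eqref{eq:module-restriction} is symmetric monoidal. It therefore preserves dual pairs, so $(L|_V)^\vee\simeq L^\vee|_V$, and it commutes with the relative tensor product.

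Put $N:=L^\vee\otimes_{\sOO_U}M\in\M(U)$. Because these identifications are natural in $V$, the left side of \eqref{eq:map-stalk-lemma} becomes
\[
 \operatorname*{colim}_{x\in V}\Gamma(V,N)=\operatorname*{colim}_{x\in V}N(V).
\]
This is the stalk of the underlying sheaf of spectra of $N$ by the neighbourhood formula \eqref{eq:stalk-neighbourhood}. That formula applies to spectra because the forgetful functor from $\sOO_U$-modules to $\Sp(\mathcal X_{/U})$ preserves colimits, and because $x^*$ on $\Sp(\mathcal X_{/U})$ is computed on each spectrum level by the space-level stalk. The latter holds since the stalk is a filtered colimit and so commutes with the finite limits $\Omega$ in the loop tower \eqref{eq:nonpointed-loop-tower}.

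\emph{Step 2 (rewrite the target).} The functor $x^*$ of \eqref{eq:module-stalk-functor} is symmetric monoidal. Hence $L_x$ is dualizable with dual $(L^\vee)_x$, and
\[
 \Map^{\Sp}_{\Mod_{\sOO_{X,x}}}(L_x,M_x)\simeq (L^\vee)_x\otimes_{\sOO_{X,x}}M_x\simeq N_x .
\]
Here the last step uses that $x^*$ commutes with relative tensor products. That in turn holds because $x^*$ is colimit preserving and symmetric monoidal, and the relative tensor product is a geometric realization of a bar construction.

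\emph{Step 3 (compatibility).} I will check that the composite of these identifications is the natural map \eqref{eq:map-stalk-lemma}, which sends a morphism $f\colon L|_V\to M|_V$ to $f_x$. The approach is to describe both sides through coevaluation and evaluation. A section $s$ of $N$ over $V$ corresponds to the morphism
\[
 L|_V\to L|_V\otimes N|_V\xrightarrow{\mathrm{ev}\otimes\id}M|_V .
\]
Since $x^*$ preserves $\mathrm{ev}$ and $\mathrm{coev}$, applying $x^*$ commutes with this correspondence. A commutative square then identifies the map with the canonical map $\operatorname*{colim}_V N(V)\to N_x$, which is an equivalence by Step 1.

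\emph{Main obstacle.} The delicate point is the compatibility in Step 3, together with making the spectral stalk formula precise for sheaves of spectra in the non-pointed slice setting. In particular, one must confirm that $\Gamma(V,-)$ on $\Sp(\mathcal X_{/U})$ is computed levelwise by sections of the loop-tower spaces. I expect to handle this by citing that $\Omega^\infty$-levels and $\Gamma$ commute with limits, and that filtered colimits of spectra are computed levelwise on $\Omega$-spectra with filtered colimits commuting with $\Omega$. I would treat the naturality check as routine bookkeeping with duality data.
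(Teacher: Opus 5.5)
Your proposal is correct and follows essentially the same route as the paper: rewrite the source as $\operatorname*{colim}_{x\in V}\Gamma\bigl(V,(L^\vee\otimes_{\sOO_U}M)|_V\bigr)$ using dualizability and the symmetric monoidality of restriction, identify this with a stalk by the neighbourhood formula, and match it with $(L_x)^\vee\otimes_{\sOO_{X,x}}M_x$ because the stalk functor is symmetric monoidal. Your Step~3 check that the composite identification is the natural map $f\mapsto f_x$, and your justification of the neighbourhood formula for sheaves of spectra, are details the paper leaves implicit rather than a different argument.
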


\begin{proof}
By \eqref{eq:ordinary-vs-internal-map} and
\eqref{eq:dual-hom-formula}, the source is
\[
 \operatorname*{colim}_{x\in V}
 \Gamma\!\left(V,L^\vee|_V\otimes_{\sOO_V}M|_V\right).
\]
The neighbourhood colimit is the stalk of the displayed internal Hom
sheaf by \eqref{eq:stalk-neighbourhood}.  The stalk functor is symmetric
monoidal, so it preserves the evaluation and coevaluation data and sends
$L^\vee$ to $(L_x)^\vee$.  Thus the last spectrum is
\[
 (L_x)^\vee\otimes_{\sOO_{X,x}}M_x
 \simeq\underline{\operatorname{Hom}}_{\sOO_{X,x}}(L_x,M_x).
\]
Taking sections over the point produces the target of
\eqref{eq:map-stalk-lemma}.
\end{proof}

\begin{lemma}[A map and its homotopies spread to a neighbourhood]
\label{lem:spread-map}
Under the hypotheses of Lemma~\ref{lem:dualizable-map-stalk}, every point
of the mapping space on the right side of \eqref{eq:map-stalk-lemma} is
represented on some neighbourhood of $x$.  Any path between two such
points is represented after a further shrinking.
\end{lemma}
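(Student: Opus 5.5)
The plan is to read the statement off Lemma~\ref{lem:dualizable-map-stalk} by passing from spectra to spaces and using that homotopy groups commute with filtered colimits. The mapping space on the right of \eqref{eq:map-stalk-lemma} is $\Omega^\infty$ of the mapping spectrum. Filtered colimits of spectra commute with $\Omega^\infty$, since $\Omega^\infty$ preserves filtered colimits (homotopy groups of spectra and of spaces both commute with filtered colimits componentwise). Applying $\Omega^\infty$ to the equivalence of Lemma~\ref{lem:dualizable-map-stalk} should therefore give an equivalence of spaces
\[
 \operatorname*{colim}_{x\in V\subseteq U}\Map_{\M(V)}(L|_V,M|_V)
 \xrightarrow{\ \simeq\ }\Map_{\Mod_{\sOO_{X,x}}}(L_x,M_x).
\]
The neighbourhood category is filtered, as fixed in the variance conventions, so the left side is a filtered colimit of spaces.

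Next I will use the standard description of $\pi_0$ and $\pi_1$ of a filtered colimit of spaces. On components,
\[
 \pi_0\operatorname*{colim}_V Y_V\cong\operatorname*{colim}_V\pi_0 Y_V .
\]
So every point $f$ of the target lies in the path component of the image of some $f_V\in\Map_{\M(V)}(L|_V,M|_V)$ for a single neighbourhood $V$. Choosing a path from $f$ to that image, $f$ is ``represented on $V$'' in the sense of the statement, namely up to a specified homotopy. For paths: given $f_V,g_V$ on $V$ and a path $\gamma$ between their images at $x$, equivalence on $\pi_0$ first shows $f_V$ and $g_V$ become homotopic on some $W\subseteq V$. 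Then, fixing one such homotopy $h_W$, the loop $\gamma\cdot h_x^{-1}$ lies in $\pi_1$ of the target at $f_x$. Since $\pi_1$ also commutes with filtered colimits, with the basepoint represented at stage $W$, this loop is the image of a loop at $f_W$ on some $W'\subseteq W$. Composing that loop with $h_{W'}$ gives a path on $W'$ whose image at $x$ is homotopic to $\gamma$.

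The main obstacle is being honest about what ``represented'' means, since a filtered colimit of spaces only represents points up to homotopy. I will state the result as existence of a representative together with an identifying path, which is all later stalk arguments (e.g.\ in Proposition~\ref{prop:Pic-stalk}) should need. I will also note explicitly that the basepoint-dependence of $\pi_1$ is handled by first spreading the basepoint and then using $\pi_1(\operatorname*{colim}_V Y_V,y)\cong\operatorname*{colim}_V\pi_1(Y_V,y_V)$ over the cofinal neighbourhoods below the stage where $y$ is represented.
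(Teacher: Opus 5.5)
Your proposal is correct and matches the paper's proof: apply $\Omega^\infty$ to the spectrum-level equivalence of Lemma~\ref{lem:dualizable-map-stalk}, use that $\Omega^\infty$ commutes with filtered colimits of arbitrary (possibly nonconnective) spectra, and conclude that points and paths in a filtered colimit of spaces are represented at finite stages. The differences are cosmetic. The paper justifies the commutation by compactness of the sphere spectrum, $\Omega^\infty\simeq\Map_{\Sp}(\mathbb S,-)$, rather than by comparing homotopy groups as you do. It also leaves implicit the $\pi_0$/$\pi_1$ bookkeeping that you spell out: representation only up to a specified path, and spreading the basepoint before invoking $\pi_1$.
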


\begin{proof}
Apply $\Omega^\infty$ to \eqref{eq:map-stalk-lemma}.  Although its mapping
spectra may be nonconnective, $\Omega^\infty E\simeq\Map_{\Sp}(\mathbb S,E)$
preserves filtered colimits because the sphere spectrum is compact; this
compactness follows from the proof of
\cite[Proposition~1.4.3.7]{LurieHA}.  Points and paths in a filtered colimit
of spaces are consequently represented at finite stages.
\end{proof}

\begin{lemma}[Local vanishing criterion]\label{lem:local-vanishing}
Let $C\in\M(U)$ be dualizable.  If $C_x\simeq0$, there is a neighbourhood
$x\in V\subseteq U$ such that $C|_V\simeq0$.
\end{lemma}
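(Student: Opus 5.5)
The plan is to reduce the vanishing of $C|_V$ to the vanishing of the identity map of $C|_V$, and then to spread that vanishing from the stalk using Lemmas~\ref{lem:dualizable-map-stalk} and~\ref{lem:spread-map}. In a stable category an object is zero exactly when its identity morphism is nullhomotopic. Hence it suffices to find a neighbourhood $V$ of $x$ on which $\id_{C|_V}$ and the zero map define the same point of $\pi_0\Map_{\M(V)}(C|_V,C|_V)$.

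\emph{Step 1.} Apply Lemma~\ref{lem:dualizable-map-stalk} with $L=M=C$. Since $C$ is dualizable, the neighbourhood colimit of the mapping spectra
\[
 \operatorname*{colim}_{x\in V\subseteq U}\Map^{\Sp}_{\M(V)}(C|_V,C|_V)
\]
is equivalent to $\Map^{\Sp}_{\Mod_{\sOO_{X,x}}}(C_x,C_x)$. This target is zero because $C_x\simeq0$. Applying $\Omega^\infty$ and using compactness of the sphere spectrum, as in the proof of Lemma~\ref{lem:spread-map}, shows that the colimit of the mapping spaces is contractible.

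\emph{Step 2.} Consider the two points $\id_C$ and $0$ of $\Map_{\M(U)}(C,C)$. Their images in the filtered colimit lie in a contractible space, so a path joins them there. By Lemma~\ref{lem:spread-map}, or directly because $\pi_0$ commutes with filtered colimits of spaces, this path is already represented at some finite stage $V\ni x$. Concretely, $[\id_{C|_V}]=[0]$ in $\pi_0\Map_{\M(V)}(C|_V,C|_V)$. Here I use that restriction $(-)|_V$ sends identities to identities and zero maps to zero maps, which holds because it is an exact functor.

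\emph{Step 3.} Since $\id_{C|_V}\simeq 0$, the object $C|_V$ is a retract of $0$, hence $C|_V\simeq0$ in the stable category $\M(V)$.

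The main point of care is Step 1. Lemma~\ref{lem:dualizable-map-stalk} is phrased for restrictions of a fixed pair over $U$, and the transition maps in the colimit must be the restriction maps, so that the class of $\id_C$ in the colimit really maps to $\id_{C_x}$. This is immediate from the naturality of the stalk functor~\eqref{eq:module-stalk-functor}. Equally important is that dualizability is exactly what makes the mapping spectra commute with the stalk. No other obstacle is expected; this lemma is designed to be applied to the cofibre of a candidate local equivalence, which is dualizable by the thick-subcategory remark.
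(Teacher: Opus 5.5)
Your proof is correct and follows the paper's argument: with $L=M=C$, both use Lemma~\ref{lem:dualizable-map-stalk} and Lemma~\ref{lem:spread-map} to carry the path $\id_{C_x}\simeq0$ from the stalk to some neighbourhood $V$. The only difference is the last step, and it is cosmetic: you conclude $C|_V\simeq0$ because $C|_V$ is a retract of $0$, whereas the paper shows that every map into or out of $C|_V$ is null and then applies the Yoneda criterion.
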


\begin{proof}
Apply Lemma~\ref{lem:spread-map} to $L=M=C$.  Since $C_x$ is zero, its
identity and zero endomorphisms are connected by a path.  After shrinking,
that path gives $\id_{C|_V}\simeq0$.  For every $Y\in\M(V)$ and every map
$g\colon C|_V\to Y$, one has
$g\simeq g\circ\id_{C|_V}\simeq0$; similarly every map into $C|_V$ is
zero.  The mapping-space Yoneda criterion
\cite[Proposition~5.1.3.1]{LurieHTT} therefore identifies $C|_V$ with the
zero object.
\end{proof}

\begin{lemma}[Filtered continuity of units]\label{lem:GL1-filtered}
For a filtered diagram $\{R_i\}$ in $\CAlg$, the canonical map
\[
 \operatorname*{colim}_i\GL_1(R_i)
 \longrightarrow\GL_1(\operatorname*{colim}_iR_i)
\]
is an equivalence.
\end{lemma}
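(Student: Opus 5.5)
The plan is to realize $\GL_1(R)$ as a pullback of spaces and then use that filtered colimits of spaces commute with finite limits. For any $E_\infty$-ring $R$ there is a pullback square
\[
 \GL_1(R)\simeq\Omega^\infty R\times_{\pi_0R}(\pi_0R)^\times,
\]
where $\pi_0R$ and $(\pi_0R)^\times$ are regarded as discrete spaces. Equivalently, $\GL_1(R)$ is the union of those components of $\Omega^\infty R$ indexed by units of $\pi_0R$, as recalled after \eqref{eq:Pic-presheaf}. The square is natural in $R$, so the canonical map of the lemma is the map induced on pullbacks by the three comparison maps
\[
 \operatorname*{colim}_i\Omega^\infty R_i\to\Omega^\infty R,\qquad
 \operatorname*{colim}_i\pi_0R_i\to\pi_0R,\qquad
 \operatorname*{colim}_i(\pi_0R_i)^\times\to(\pi_0R)^\times,
\]
with $R=\operatorname*{colim}_iR_i$. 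Since filtered colimits in $\Spaces$ commute with finite limits, it suffices to show that each of these three maps is an equivalence.

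\textbf{The first map.} The forgetful functor $\CAlg\to\Sp$ preserves sifted, hence filtered, colimits by \cite[Section~3.2.3]{LurieHA}, exactly as in the proof of Lemma~\ref{lem:filtered-pi}. The functor $\Omega^\infty\simeq\Map_{\Sp}(\mathbb S,-)$ preserves filtered colimits because $\mathbb S$ is compact, as already used in Lemma~\ref{lem:spread-map}. Together these give the first equivalence. Note that connectivity is not needed anywhere, so the statement holds for arbitrary $\CAlg$ as stated.

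\textbf{The second map.} It is an isomorphism of rings because $\pi_0$ of spectra commutes with filtered colimits.

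\textbf{The third map.} This is the only genuinely algebraic step, and the one requiring care: we must show that units in a filtered colimit of commutative rings are exactly the colimit of the unit groups. The map is injective, since it is the restriction of the bijection $\operatorname*{colim}_i\pi_0R_i\to\pi_0R$ and filtered colimits of sets preserve injections. For surjectivity, let $u\in(\pi_0R)^\times$ with inverse $v$. Choose a stage $i$ at which both $u$ and $v$ are represented, say by $u_i,v_i$. The relation $u_iv_i=1$ holds in the colimit, so it already holds at some later stage $j$. Hence $u$ is the image of a unit of $\pi_0R_j$. With all three maps identified as equivalences, the induced map on pullbacks is an equivalence, which proves the lemma.

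As a cross-check on homotopy groups, \eqref{eq:GL1-homotopy} gives $\pi_0\GL_1(R)=(\pi_0R)^\times$ and $\pi_n\GL_1(R)\cong\pi_n R$ for $n\geq1$ at every basepoint. Lemma~\ref{lem:filtered-pi} and the unit computation then show that the map is bijective on $\pi_0$ and an isomorphism on all higher $\pi_n$. In the connective case this alone already gives an alternative argument via Whitehead's theorem.
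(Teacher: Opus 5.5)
Your proof is correct and is essentially the paper's argument: both combine filtered-colimit preservation of $\Omega^\infty\simeq\Map_{\Sp}(\mathbb S,-)$ and of the forgetful functor $\CAlg\to\Sp$ with the observation that a unit in $\pi_0(\operatorname*{colim}_iR_i)\cong\operatorname*{colim}_i\pi_0R_i$ is already a unit at some finite stage. Your pullback square $\GL_1(R)\simeq\Omega^\infty R\times_{\pi_0R}(\pi_0R)^\times$, together with the commutation of filtered colimits with finite limits in $\Spaces$, makes explicit the paper's step of ``restricting to precisely the unit components.''
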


\begin{proof}
The forgetful functor $\CAlg\to\Sp$ preserves sifted colimits
\cite[Section~3.2.3]{LurieHA}.  Moreover,
\[
 \Omega^\infty E\simeq\Map_{\Sp}(\mathbb S,E)
\]
preserves filtered colimits for arbitrary, possibly nonconnective, spectra
because the sphere spectrum is compact; compare
\cite[Proposition~1.4.3.7]{LurieHA}.  Hence
\begin{equation}\label{eq:Omega-infinity-filtered-all-spectra}
 \operatorname*{colim}_i\Omega^\infty R_i
 \simeq
 \Omega^\infty\!\left(\operatorname*{colim}_iR_i\right).
\end{equation}

It remains to identify the union of unit components.  Homotopy groups of
spectra commute with filtered colimits, so
$\pi_0(\operatorname*{colim}_iR_i)\cong
\operatorname*{colim}_i\pi_0R_i$ as rings.  Suppose the image of
$a_i\in\pi_0R_i$ is a unit in this colimit.  Its inverse is represented by
some $b_k\in\pi_0R_k$.  Filteredness gives a common later stage, and the
two equalities $a_ib_k=1$ and $b_ka_i=1$ hold at some still later stage.
Thus $a_i$ becomes a unit at a finite stage.  Conversely, every ring map
sends units to units.  By the description of $\GL_1$ in
\cite[Definition~5.3.2.4]{LurieHA}, restricting
\eqref{eq:Omega-infinity-filtered-all-spectra} to precisely these components
gives the asserted equivalence, including all paths and higher homotopies
inside each component.
\end{proof}

\begin{remark}[Why dualizability is used]
For an arbitrary sheaf of modules $C$, the assertion $C_x\simeq0$ need not
force $C$ to vanish on one neighbourhood merely by formal manipulation of
global mapping objects.  Dualizability turns the internal endomorphism
object into $C^\vee\otimes C$, whose formation commutes with the monoidal
stalk functor.  That is the precise finiteness input in Lemma
\ref{lem:local-vanishing}.
\end{remark}

\begin{proposition}\label{prop:Pic-stalk}
The map \eqref{eq:theta} is an equivalence of spaces.
\end{proposition}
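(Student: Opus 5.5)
We prove that $\theta_x$ is surjective on $\pi_0$ and induces equivalences on every component; together these make $\theta_x$ an equivalence of spaces.

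\emph{Step 1 ($\pi_0$-surjectivity).} By the computation \eqref{eq:spectral-stalk}, $\sOO_{X,x}$ is a connective $E_\infty$-ring with local $\pi_0$. Antieau--Gepner \cite[Theorem~7.9]{AntieauGepner} then gives $\pi_0\Pic(\sOO_{X,x})\cong\Z$, with $n$ corresponding to $\Sigma^n\sOO_{X,x}$. The element $\Sigma^n\sOO_U$ is invertible in $\M(U)$, and its stalk is $\Sigma^n\sOO_{X,x}$, since $x^*$ in \eqref{eq:module-stalk-functor} is exact and symmetric monoidal. So every component of the target is hit already on $U=|X|$.

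\emph{Step 2 ($\pi_0$-injectivity).} Let $L,L'$ be invertible on a neighbourhood $W$ of $x$, and suppose $L_x\simeq L'_x$. Both are dualizable, so Lemma~\ref{lem:spread-map} applies to this equivalence $f_x$ and to its inverse. After shrinking, we obtain maps $f\colon L|_V\to L'|_V$ and $g\colon L'|_V\to L|_V$ whose stalks are $f_x$ and $f_x^{-1}$. The paths $g_xf_x\simeq\id$ and $f_xg_x\simeq\id$ then spread, by the path clause of Lemma~\ref{lem:spread-map}, to homotopies $gf\simeq\id$ and $fg\simeq\id$ on a smaller $V$. Hence $L|_V\simeq L'|_V$.

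Alternatively, one can take the cofibre $C$ of a spread map $f$. It is dualizable by the thick-subcategory argument of the preliminaries, and $C_x\simeq0$, so Lemma~\ref{lem:local-vanishing} gives $C|_V\simeq0$, and $f$ is an equivalence on $V$. This is the route the paper flags, and I would present it this way.

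\emph{Step 3 (components and higher homotopy).} Fix a representative $L$ on $W$; we may take it to be $\Sigma^n\sOO_W$ after shrinking, by Step 2. Filtered colimits of spaces commute with taking the component of a compatible basepoint. Each Picard space is a grouplike $E_\infty$-space, so every component is equivalent to the unit component via tensoring with $L$, compatibly with restriction and with $x^*$. It therefore suffices to compare the unit components, that is, to show
\[
\operatorname*{colim}_{x\in V}\Aut_{\M(V)}(\sOO_V)\longrightarrow\Aut(\sOO_{X,x})\simeq\GL_1(\sOO_{X,x}).
\]
By Lemma~\ref{lem:dualizable-map-stalk} with $L=M=\sOO_V$, followed by $\Omega^\infty$, which commutes with filtered colimits, we get
\[
\operatorname*{colim}_V\Map_{\M(V)}(\sOO_V,\sOO_V)\simeq\Omega^\infty\sOO_{X,x}.
\]
We then restrict to invertible components. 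An endomorphism is invertible at a finite stage exactly when its stalk class lies in $(\pi_0\sOO_{X,x})^\times$: this follows by the argument of Lemma~\ref{lem:GL1-filtered}, or equivalently by spreading inverses as in Step 2. Since filtered colimits commute with taking unions of components, this yields the equivalence, and also establishes \eqref{eq:internal-GL1-stalk}.

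Combining $\pi_0$-bijectivity with equivalence on each component, via the long exact sequences of homotopy groups at all basepoints, shows that $\theta_x$ is an equivalence.

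\emph{Main obstacle.} The delicate point is Step 3's claim that the neighbourhood colimit of $\Pic$ commutes with passing to a fixed component and that the component translations are natural. This must be handled at the level of spaces, not merely homotopy groups. I would address it by writing $\Pic(\M(V))\simeq\pi_0\times$ (unit component) noncanonically, and instead using the fibre sequence
\[
\Aut(\mathbf 1)\to *\to\Pic^{0},
\]
that is, $\Pic^0\simeq B\Aut(\mathbf 1)$. Delooping commutes with filtered colimits of grouplike spaces, which gives the required naturality.
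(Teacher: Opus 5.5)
Your proposal is correct and follows essentially the paper's route. The three pieces are the same: surjectivity on components from Antieau--Gepner via the globally defined $\Sigma^n\sOO_U$; injectivity on components by spreading a stalk equivalence and killing its dualizable cofibre (Lemma~\ref{lem:local-vanishing}); and reduction to unit automorphism spaces, handled by filtered continuity of $\GL_1$ as in Lemma~\ref{lem:GL1-filtered}. Your translation by $L$ and the remark $\Pic^0\simeq B\Aut(\mathbf 1)$ only make explicit the component bookkeeping that the paper does by standardizing to $\Sigma^n\sOO_V$ and tensoring with $\Sigma^{-n}\sOO_V$.
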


\begin{proof}
\emph{Step 1: essential surjectivity.}
By \cite[Theorem~7.9]{AntieauGepner}, every invertible module over a
connective local $E_\infty$-ring $R$ is equivalent to a unique suspension
$\Sigma^nR$ at the level of equivalence classes:
\begin{equation}\label{eq:local-pi0-pic}
 \Z\xrightarrow{\cong}\pi_0\Pic(R),
 \qquad n\longmapsto[\Sigma^nR].
\end{equation}
Every $\Sigma^n\sOO_{X,x}$ is the stalk of $\Sigma^n\sOO_U$, so
$\theta_x$ is essentially surjective.  The representative is defined on
every neighbourhood, so no general spreading theorem for arbitrary
perfect modules is needed here.

\medskip\noindent
\emph{Step 2: equivalences spread.}
We next show that an equivalence at the stalk is represented locally.
Suppose $f_x\colon L_x\xrightarrow{\simeq}M_x$ for invertible modules
$L,M\in\M(U)$.  Since invertible objects are dualizable, internal Hom is
tensoring with a dual; see \cite[Section~4.6.1]{LurieHA}.  Hence $f_x$ is
represented, after shrinking to $V\ni x$, by a map
$f_V\colon L|_V\to M|_V$.  Put $C_V:=\operatorname{cofib}(f_V)$.
Dualizable objects form a thick subcategory in this stable monoidal setting:
duality sends a finite cofiber sequence to the corresponding fibre
sequence, and tensoring is exact.  Thus $C_V$ is dualizable and
\begin{equation}\label{eq:end-dualizable}
 \underline{\operatorname{Hom}}_{\sOO_V}(C_V,C_V)
 \simeq C_V^\vee\otimes_{\sOO_V}C_V.
\end{equation}
The stalk functor is symmetric monoidal, preserves duals, and commutes with
the tensor product in \eqref{eq:end-dualizable}.

For $x\in W\subseteq V$, define the ordinary mapping spectrum
\[
 E_W:=\Map^{\Sp}_{\M(W)}(C_V|_W,C_V|_W)
 \simeq
 \Gamma\!\left(W,
 \underline{\operatorname{Hom}}_{\sOO_W}(C_V|_W,C_V|_W)\right).
\]
Here $\!$ is only negative thin spacing.  The preceding compatibility gives
\begin{equation}\label{eq:mapping-spectrum-stalk}
 \operatorname*{colim}_{x\in W\subseteq V}E_W
 \simeq
 \Map^{\Sp}_{\Mod_{\sOO_{X,x}}}((C_V)_x,(C_V)_x).
\end{equation}
The mapping spectra in \eqref{eq:mapping-spectrum-stalk} need not be
connective.  Nevertheless $\Omega^\infty$ preserves filtered colimits:
\[
 \Omega^\infty E\simeq\Map_{\Sp}(\mathbb S,E),
\]
and the sphere spectrum is compact, as follows from the proof of
\cite[Proposition~1.4.3.7]{LurieHA}.  Applying $\Omega^\infty$ to
\eqref{eq:mapping-spectrum-stalk} is therefore legitimate without a
connectivity hypothesis.

Since $(C_V)_x\simeq0$, the path
$\operatorname{id}_{(C_V)_x}\simeq0$ in the mapping space is represented at
some stage $W$: $\operatorname{id}_{C_V|_W}\simeq0$.  By the mapping-space
Yoneda lemma \cite[Proposition~5.1.3.1]{LurieHTT}, this implies
$C_V|_W\simeq0$, and hence $f_V|_W$ is an equivalence.  This proves the
required injectivity on components.

We now reduce arbitrary pairs of objects to the standard suspensions before
comparing their mapping spaces.  Let $L,M$ be represented on a common
neighbourhood of $x$.  By \eqref{eq:local-pi0-pic}, choose integers $n,m$
and stalk equivalences
\[
 L_x\simeq\Sigma^n\sOO_{X,x},
 \qquad M_x\simeq\Sigma^m\sOO_{X,x}.
\]
The spreading argument just proved, applied separately to these two
equivalences, gives after shrinking to one neighbourhood $V$
\begin{equation}\label{eq:local-standardize-pair}
 L|_V\simeq\Sigma^n\sOO_V,
 \qquad M|_V\simeq\Sigma^m\sOO_V.
\end{equation}
If $n\neq m$, the equivalence space between $L_x$ and $M_x$ in the Picard
groupoid is empty.  No equivalence can occur after further shrinking in
\eqref{eq:local-standardize-pair}, since taking its stalk would contradict
the uniqueness of suspension degree.  If $n=m$, tensoring source and target
with $\Sigma^{-n}\sOO_V$ identifies the equivalence space from $L|_V$ to
$M|_V$ with the automorphism space of $\sOO_V$.  Consequently, full
faithfulness for every pair $(L,M)$ reduces exactly to continuity of the
unit automorphism spaces, which is the content of Step~3.

\medskip\noindent
\emph{Step 3: automorphisms and all their higher homotopies.}
It remains to compare automorphism spaces.  Put
\[
 A_U:=\Gamma\!\left(U,\sOO_U\right)\in\CAlg.
\]
Then
\begin{equation}\label{eq:ring-stalk-colim}
 \sOO_{X,x}\simeq\operatorname*{colim}_{x\in U}A_U
\end{equation}
in $\CAlg$.  Notice that $A_U$ need not be connective: derived global
sections of an internally connective spectral sheaf can have negative
homotopy groups.  Nevertheless Lemma~\ref{lem:GL1-filtered} applies to the
filtered neighbourhood diagram and gives
\begin{equation}\label{eq:GL1-colim}
 \operatorname*{colim}_{x\in U}\GL_1(A_U)
 \xrightarrow{\simeq}\GL_1(\sOO_{X,x}).
\end{equation}
Indeed,
$\Aut_{\M(U)}(\Sigma^n\sOO_U)\simeq\GL_1(A_U)$, while the corresponding
stalk automorphism space is $\GL_1(\sOO_{X,x})$.  Combining
\eqref{eq:local-pi0-pic} and \eqref{eq:GL1-colim} proves full faithfulness of
$\theta_x$, and therefore proves the proposition.
\end{proof}

\subsection{The local Picard space}

Theorem~7.9 of Antieau--Gepner controls only $\pi_0$; it must not be read as
an equivalence $\Pic(R)\simeq\Z$ of spaces.  Each component represented by
$\Sigma^nR$ has loop space $\GL_1(R)$.  Thus, for a connective local
$E_\infty$-ring $R$, there is an equivalence of underlying spaces
\begin{equation}\label{eq:local-Pic-space}
 \Pic(R)\simeq\Z\times B\GL_1(R).
\end{equation}
This is proved here from the local classification in
\cite[Theorem~7.9]{AntieauGepner} and the automorphism calculation
\eqref{eq:aut-free-rank-one}.  Compare
\cite[Corollary~7.10]{AntieauGepner}, which gives the analogous split fibre
sequence in the étale-hypersheaf setting; that corollary is not used to
deduce an ordinary open-site equivalence.

For completeness, the space-level argument is as follows.  The degree map
sends an invertible module to the unique integer in
\eqref{eq:local-pi0-pic}.  Its fibre over $n$ is the connected component
containing $\Sigma^nR$.  A connected $\infty$-groupoid $Y$ with base point
$y$ is equivalent to $B\Omega_yY$.  By
\eqref{eq:aut-free-rank-one}, the based loop space here is
$\Aut_R(\Sigma^nR)\simeq\GL_1(R)$.  Thus every component is
$B\GL_1(R)$, while the components themselves are indexed by $\Z$.
Choosing the representatives $\Sigma^nR$ yields
\eqref{eq:local-Pic-space}.  This also explains why a theorem computing
only $\pi_0$ cannot be quoted as the entire space equivalence.

\begin{corollary}[Stalk form of the Picard hypersheaf]
\label{cor:Pic-stalk-form}
For every $x\in|X|$ there is an equivalence of underlying spaces
\[
 (\Pic_{\sOO_X})_x\simeq
 \Z\times B\GL_1(\sOO_{X,x}).
\]
The integer records suspension degree, and the based loop space of every
component is $\GL_1(\sOO_{X,x})$.
\end{corollary}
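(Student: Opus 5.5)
The corollary should follow by composing two equivalences already established. Proposition~\ref{prop:Pic-stalk} identifies the stalk $(\Pic_{\sOO_X})_x$ with $\Pic(\sOO_{X,x})$ via $\theta_x$. Section~4.1 shows that $\sOO_{X,x}$ is a connective local $E_\infty$-ring, so the space-level decomposition \eqref{eq:local-Pic-space} applies with $R=\sOO_{X,x}$. The plan is to apply these two results in turn and then check that the two descriptive assertions survive the composite.

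\textbf{Step 1.} Invoke Proposition~\ref{prop:Pic-stalk} to get $\theta_x\colon(\Pic_{\sOO_X})_x\xrightarrow{\simeq}\Pic(\sOO_{X,x})$. The stalk is computed by the neighbourhood colimit \eqref{eq:stalk-neighbourhood}. Because $\mathcal X$ is hypercomplete with enough points, no extra correction is needed there.

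\textbf{Step 2.} Use the computation \eqref{eq:spectral-stalk} to confirm that $\sOO_{X,x}$ is connective with local $\pi_0$. The connectivity comes from Lemma~\ref{lem:filtered-pi}; the locality of $\pi_0$ comes from \eqref{eq:derived-implies-local-stalk} together with \eqref{eq:comparison-homotopy-groups}.

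\textbf{Step 3.} Apply the space-level argument following \eqref{eq:local-Pic-space}:
\begin{itemize}
\item The degree map $\Pic(\sOO_{X,x})\to\Z$ is the bijection on components from \cite[Theorem~7.9]{AntieauGepner}.
\item Each fibre is connected, so it is $B$ of its based loop space.
\item By \eqref{eq:aut-free-rank-one}, that loop space is $\Aut(\Sigma^n\sOO_{X,x})\simeq\GL_1(\sOO_{X,x})$.
\end{itemize}
Composing with Step~1 gives the displayed equivalence.

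\textbf{Step 4.} Check the two interpretive clauses.
\begin{itemize}
\item \emph{The integer is suspension degree.} $\theta_x$ sends the germ of $\Sigma^n\sOO_U$ to $\Sigma^n\sOO_{X,x}$, as in Step~1 of the proof of Proposition~\ref{prop:Pic-stalk}. Hence the component of $(\Pic_{\sOO_X})_x$ labelled $n$ is the germ class of $\Sigma^n\sOO_U$.
\item \emph{Based loop spaces are $\GL_1(\sOO_{X,x})$.} On the source side, this loop space is $\operatorname{colim}_{x\in U}\Aut(\Sigma^n\sOO_U)\simeq\operatorname{colim}_{x\in U}\GL_1(A_U)$, using that loop spaces commute with filtered colimits of spaces. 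By \eqref{eq:GL1-colim}, this colimit is $\GL_1(\sOO_{X,x})$, which is consistent with \eqref{eq:internal-GL1-stalk}.
\end{itemize}

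The main point needing care is the warning already made in the paper: the result is only an equivalence of \emph{underlying spaces}. It does not split as $E_\infty$-spaces, and the product decomposition depends on choosing the representatives $\Sigma^n$. I will state it only at that level and will not claim any multiplicative refinement.
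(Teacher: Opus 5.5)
Your proposal is correct and is essentially the paper's proof, which just combines Proposition~\ref{prop:Pic-stalk}, the connectivity and locality of $\sOO_{X,x}$ from Section~4.1, and the space-level decomposition \eqref{eq:local-Pic-space} for $R=\sOO_{X,x}$. Your Step~4 recomputation of the loop spaces via \eqref{eq:GL1-colim} only retraces Step~3 of that proposition, and it is automatic once $\theta_x$ is an equivalence. Your closing assertion that there is no $E_\infty$ splitting goes further than the paper, which merely declines to claim one. The assertion is true here, because the Koszul sign makes $\eta$ act by $-1\neq1$ on $\pi_0$, but the corollary does not need it.
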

\begin{proof}
Combine Proposition~\ref{prop:Pic-stalk}, the locality and connectivity of
$\sOO_{X,x}$, and \eqref{eq:local-Pic-space}.
\end{proof}

\section{Proof of the Picard--Brauer theorem}

The proof globalizes the stalk computation in three separate operations:
construct the degree morphism, identify its homotopy fibre, and split the
resulting extension.  Enough points is used only to test the first two
globally defined morphisms.

\begin{lemma}[A central split extension is a product]
\label{lem:central-split-product}
Let $F\to G\xrightarrow{q}Q$ be a fibre sequence of grouplike $E_1$-objects
in an $\infty$-topos.  Suppose $G$ underlies a grouplike $E_\infty$-object
and $q$ has an $E_1$ section $s$.  Then
\[
 F\times Q\longrightarrow G,\qquad(f,q)\longmapsto f\,s(q)
\]
is an equivalence of grouplike $E_1$-objects.
\end{lemma}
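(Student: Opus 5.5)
The plan is to split the statement into two independent assertions: the displayed map $\mu\colon F\times Q\to G$, $(f,q)\mapsto f\,s(q)$, is a morphism of grouplike $E_1$-objects, and its underlying morphism in the $\infty$-topos is an equivalence. Since equivalences of $E_1$-objects are detected on underlying objects, the second assertion then finishes the proof.

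\emph{Step 1: $\mu$ is an $E_1$ map.} I would write $\mu$ as the composite
\[
 F\times Q\xrightarrow{\ i\times s\ }G\times G\xrightarrow{\ m\ }G,
\]
where $i\colon F\to G$ is the fibre inclusion and $m$ is the multiplication. The maps $i$ and $s$ are $E_1$ by hypothesis, so $i\times s$ is an $E_1$ map into the product $E_1$-object. The real issue is $m$: for a mere $E_1$-object, multiplication is not a homomorphism $G\times G\to G$. Here I would use that $G$ underlies a grouplike $E_\infty$-object. By Dunn additivity, an $E_\infty$ (indeed $E_2$) structure makes $m$ a morphism of $E_1$-objects, namely the $E_1$-structure map of $G$ regarded as an $E_1$-algebra in $E_1$-algebras.

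One point needs care: the $E_1$-structure on $G$ used for $F\to G\to Q$ must be the one underlying the given $E_\infty$ structure. I read this as the meaning of ``underlies'' in the statement, since otherwise the centrality hypothesis would have no content. I expect this step to be the main obstacle. I would handle it by working in $\Mon_{E_1}(\Mon_{E_1}(\mathcal X))$ and citing the Eckmann--Hilton/additivity identification in Lurie's \emph{Higher Algebra}, rather than attempting to write coherences by hand.

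\emph{Step 2: underlying equivalence.} First, $\mu$ lies over $Q$: since $q$ is $E_1$, $q\circ i$ is trivial and $q\circ s\simeq\id$, we get $q\,\mu(f,q')\simeq q(f)\,q(s(q'))\simeq q'$. I would then construct an explicit inverse using the inversion map $\iota\colon G\to G$ available because $G$ is grouplike:
\[
 \nu\colon G\to F\times Q,\qquad g\longmapsto\bigl(g\cdot\iota(s(q(g))),\,q(g)\bigr).
\]
The first coordinate lands in $F$ because $q\bigl(g\cdot\iota(sq(g))\bigr)\simeq q(g)\,q(g)^{-1}\simeq e$. This gives a canonical null-homotopy of the composite to $Q$, hence a factorisation through the fibre $F=G\times_Q\{e\}$. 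The composites $\mu\nu$ and $\nu\mu$ are then identified with the identities using the associativity, unit and inverse homotopies of $G$, together with $q\circ s\simeq\id$; all of these can be chosen compatibly with the factorisation through $F$.

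If these homotopies become awkward, I would instead argue fibrewise. Both $F\times Q$ and $G$ are objects over $Q$ whose fibres over the unit are identified with $F$ by $\mu$. Translation by $s$, that is, the shear map $G\times_Q G\simeq G\times F$ of the principal $F$-action, identifies pullbacks along an arbitrary $T\to Q$. The equivalence then follows from descent in $\mathcal X$, since a map over $Q$ is an equivalence when it is so after base change along an effective epimorphism onto $Q$.
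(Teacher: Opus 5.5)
Your proposal is correct and follows essentially the same route as the paper. Both arguments show that $\mu=m\circ(i\times s)$ is an $E_1$-map because the $E_\infty$ (already $E_2$) structure makes multiplication a homomorphism, and both invert $\mu$ on underlying objects via $g\mapsto\bigl(g\,s(q(g))^{-1},q(g)\bigr)$ before concluding by conservativity of the forgetful functor; your Dunn-additivity justification and torsor/descent fallback only make explicit what the paper phrases as a semidirect product whose conjugation action is trivialized by the $E_\infty$ structure.
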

\begin{proof}
A split extension of group objects is generally a semidirect product, with
action given by conjugation through the section.  Because the multiplication
on $G$ is $E_\infty$, the multiplication map
\[
 F\times s(Q)\longrightarrow G
\]
is a morphism of grouplike $E_1$-objects, and the induced conjugation
action of $Q$ on $F$ is coherently equivalent to the trivial action.  Thus
the displayed map in the statement is a morphism of grouplike
$E_1$-objects, not only a map of spaces.

Its underlying inverse is
\[
 G\longrightarrow F\times Q,\qquad
 g\longmapsto\bigl(g\,s(q(g))^{-1},q(g)\bigr).
\]
Indeed, applying $q$ to the first component gives
$q(g)q(g)^{-1}=1$, so that component lies in $F$.  The unit, associativity,
and inverse identities give both composites homotopic to the identity.
Hence the multiplication map is an underlying equivalence and therefore
an equivalence of grouplike $E_1$-objects.
\end{proof}

\subsection{The degree morphism and its fibre}

Suspension gives a morphism of sheaves of sets
\[
 \underZ\longrightarrow\pi_0\Pic_{\sOO_X},
 \qquad n\longmapsto[\Sigma^n\sOO_X].
\]
By Proposition~\ref{prop:Pic-stalk} and
\eqref{eq:local-pi0-pic}, it is an isomorphism on every stalk.  Since
$\mathcal X$ has enough points, it is an isomorphism of sheaves.  Composing
the canonical truncation with its inverse defines
\begin{equation}\label{eq:degree}
 \deg\colon\Pic_{\sOO_X}\longrightarrow\underZ.
\end{equation}
The truncation map is multiplicative, and the stalkwise identification
sends tensor product to addition because
$(\Sigma^m\sOO)\otimes_{\sOO}(\Sigma^n\sOO)\simeq\Sigma^{m+n}\sOO$.
Thus $\deg$ is a morphism of grouplike $E_1$-objects, not merely a map of
underlying sheaves.

There is a natural map
\begin{equation}\label{eq:BGL1-fibre-map}
 B\GL_1(\sOO_X)\longrightarrow\operatorname{fib}(\deg)
\end{equation}
which identifies a unit with an automorphism of the rank-one module.  To
construct this map, use
$\GL_1(\sOO_X)\simeq\Aut_{\M}(\sOO_X)$ and deloop it into the connected
component of the unit module in $\Pic_{\sOO_X}$.  That component has degree
zero, so the map factors through the homotopy fibre.  This global
construction occurs before the stalk test.
On each stalk, \eqref{eq:BGL1-fibre-map} is the degree-zero part of
\eqref{eq:local-Pic-space}, hence an equivalence.  Enough points now gives
\begin{equation}\label{eq:degree-fibre}
 \operatorname{fib}(\deg)\simeq B\GL_1(\sOO_X).
\end{equation}

For the splitting we retain the underlying grouplike $E_1$-structures.
Suspension gives a section
\begin{equation}\label{eq:E1-section}
 s\colon\underZ\longrightarrow\Pic_{\sOO_X},
 \qquad n\longmapsto\Sigma^n\sOO_X,
 \qquad \deg\circ s\simeq\operatorname{id}_{\underZ}.
\end{equation}
For comparison, \cite[Corollary~7.10]{AntieauGepner} gives an analogous
split fibre sequence in its étale-hypersheaf setting.  The open-site
section \eqref{eq:E1-section} and its splitting are constructed and proved
independently here.  One should not silently promote $s$ to an
$E_\infty$-section:
the symmetry on suspensions contains the Koszul sign.  Since the ambient
Picard object is grouplike $E_\infty$, its underlying $E_1$ extension is
central.

For an additional direct check, let
$i\colon B\GL_1(\sOO_X)\to\Pic_{\sOO_X}$ be the composite of
\eqref{eq:BGL1-fibre-map} with the fibre inclusion.  Multiplication defines
\begin{equation}\label{eq:direct-splitting-map}
 \mu\colon\underZ\times B\GL_1(\sOO_X)\longrightarrow\Pic_{\sOO_X},
 \qquad(n,L)\longmapsto s(n)\otimes i(L).
\end{equation}
The $E_\infty$ multiplication on the target makes the images of $s$ and
$i$ coherently commute, so \eqref{eq:direct-splitting-map} is a morphism of
grouplike $E_1$-objects.  At a point $x$, Corollary
\ref{cor:Pic-stalk-form} identifies it with
\[
 \Z\times B\GL_1(\sOO_{X,x})\longrightarrow
 \Pic(\sOO_{X,x}),
 \qquad(n,L)\longmapsto\Sigma^n\sOO_{X,x}\otimes L,
\]
which is the local equivalence \eqref{eq:local-Pic-space}.  Since the
ambient hypercomplete topos has enough points, $\mu$ is an equivalence.
Equivalently, Lemma~\ref{lem:central-split-product} applies to
\eqref{eq:degree-fibre}--\eqref{eq:E1-section}.  We have therefore proved
\begin{equation}\label{eq:Pic-splitting}
 \Pic_{\sOO_X}\simeq
 \underZ\times B\GL_1(\sOO_X)
\end{equation}
as hypersheaves of grouplike $E_1$-spaces.

\subsection{The Picard--Brauer object}

\begin{definition}\label{def:BrPic}
The Picard--Brauer object of $X$ is
\[
 \BrPic_X:=B\Pic_{\sOO_X}.
\]
\end{definition}

\begin{theorem}[Picard--Brauer theorem]\label{thm:main}
For every derived smooth manifold $X$, there is a natural equivalence of
hypersheaves of connected pointed spaces
\begin{equation}\label{eq:main}
 \boxed{
 \BrPic_X\simeq
 K(\underZ,1)\times B^2\GL_1(\sOO_X).}
\end{equation}
\end{theorem}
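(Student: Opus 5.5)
The plan is to apply the delooping functor $B$ to the splitting \eqref{eq:Pic-splitting} and to check that $B$ commutes with finite products in the hypercomplete topos $\mathcal X$. By \cite[Lemma~7.2.2.11]{LurieHTT}, grouplike $E_1$-objects of $\mathcal X$ are equivalent to pointed connected objects, via $G\mapsto BG$ with inverse $\Omega$. Since this is an equivalence of $\infty$-categories, any equivalence of grouplike $E_1$-objects
\[
 \mu\colon \underZ\times B\GL_1(\sOO_X)\xrightarrow{\ \simeq\ }\Pic_{\sOO_X}
\]
induces an equivalence $B\mu$ of pointed connected hypersheaves. This is the reason \eqref{eq:Pic-splitting} was established as an $E_1$ equivalence and not merely an equivalence of underlying spaces. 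The product $\underZ\times B\GL_1(\sOO_X)$ carries the product $E_1$-structure, and the second factor carries the group structure induced from the $E_\infty$-object $\GL_1(\sOO_X)$. Each factor was shown to be a hypersheaf in Section~3.

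The next step is to show that $B(G\times H)\simeq BG\times BH$ for grouplike $E_1$-objects. The functor $\Omega$ preserves limits, so $\Omega(BG\times BH)\simeq G\times H$ as group objects. The product of pointed connected objects is again pointed connected, because $\pi_0$-sheaves commute with finite products and effective epimorphisms are stable under products. By the correspondence, $BG\times BH$ is therefore the delooping of $G\times H$. Applying this with $G=\underZ$ and $H=B\GL_1(\sOO_X)$ gives
\[
 \BrPic_X=B\Pic_{\sOO_X}\simeq B\underZ\times B\bigl(B\GL_1(\sOO_X)\bigr).
\]

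It then remains to identify the two factors. For the first, $B\underZ$ is the pointed connected object whose only nontrivial homotopy sheaf is $\underZ$, in degree $1$; this is $K(\underZ,1)$ by definition. For the second, $B(B\GL_1(\sOO_X))$ should be understood as $B^2\GL_1(\sOO_X)$. This identification requires that the group structure on $B\GL_1(\sOO_X)$ used in \eqref{eq:Pic-splitting} is the one obtained by delooping the $E_\infty$ (in particular $E_2$) structure of $\GL_1(\sOO_X)$. That holds because $i$ was constructed by delooping $\Aut_{\M}(\sOO_X)$ into the unit component of $\Pic_{\sOO_X}$. The unit component is a sub-$E_\infty$-object, and its multiplication, looped once, recovers composition of automorphisms of the unit, which is the multiplication on $\GL_1$ by the Eckmann--Hilton argument.

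Naturality in $X$ follows because every map used is built functorially from $\sOO_X$: the suspension section $s$, the map $i$, the multiplication $\mu$, and the delooping functor. The main obstacle is the group-structure compatibility in the previous paragraph. The factor $B\GL_1(\sOO_X)$ inside $\Pic_{\sOO_X}$ inherits the tensor-product structure, while $B^2$ requires the delooped $E_2$ structure, and the two must be shown to agree. I would settle this by an explicit Eckmann--Hilton argument on the unit component, exhibiting it as the $E_\infty$-object $B\GL_1$ of the $E_\infty$-monoid of unit automorphisms. As a consistency check, the homotopy sheaves of both sides agree stalkwise: $\underZ$ in degree $1$ and $\pi_{k-2}\GL_1(\sOO_{X,x})$ in degree $k\ge 2$.
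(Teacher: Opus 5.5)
Your proposal is correct and follows the same route as the paper: deloop the grouplike $E_1$ splitting \eqref{eq:Pic-splitting} via \cite[Lemma~7.2.2.11]{LurieHTT}, use that delooping preserves finite products, and identify $B\underZ=K(\underZ,1)$ and $B(B\GL_1(\sOO_X))=B^2\GL_1(\sOO_X)$. You also check that the $E_1$-structure on $B\GL_1(\sOO_X)$ inherited from the unit component of $\Pic_{\sOO_X}$ agrees with the delooped $E_\infty$-structure of $\GL_1(\sOO_X)$; the paper leaves this implicit, and your argument is sound, since the paper defines $\GL_1(\sOO_X)$ as $\Aut(\sOO_U)$, i.e.\ as $\Omega$ of that unit component.
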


\begin{proof}
\emph{Existence of the delooping.}
Delooping is an equivalence from group objects to connected pointed objects
\cite[Lemma~7.2.2.11]{LurieHTT}.  It applies internally in the
hypercomplete $\infty$-topos and preserves finite products.

\medskip\noindent
\emph{The two factors.}
Delooping the grouplike $E_1$ equivalence \eqref{eq:Pic-splitting} gives
\[
 B\Pic_{\sOO_X}
 \simeq B\underZ\times B^2\GL_1(\sOO_X).
\]
The delooping of the discrete group object $\underZ$ is, by definition,
$K(\underZ,1)$.  Naturality follows from functoriality of restriction,
degree, unit automorphisms, and delooping.
\end{proof}

\begin{corollary}[The two-truncation]\label{cor:truncations}
There is a natural equivalence
\[
 \tau_{\leq2}\BrPic_X\simeq
 K(\underZ,1)\times K((\pi_0\sOO_X)^\times,2).
\]
The higher Postnikov layers of the second factor are determined by the
positive homotopy sheaves of $\sOO_X$, shifted upward by two.
\end{corollary}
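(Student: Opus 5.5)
We apply the truncation functor $\tau_{\leq2}$ to the equivalence \eqref{eq:main} of Theorem~\ref{thm:main}, and then identify the truncations of the two factors separately. Truncation in the hypercomplete $\infty$-topos $\mathcal X$ is a left exact localization, so it preserves finite products. This gives
\[
 \tau_{\leq2}\BrPic_X\simeq \tau_{\leq2}K(\underZ,1)\times\tau_{\leq2}B^2\GL_1(\sOO_X).
\]
The first factor is already $1$-truncated, because its only nonzero homotopy sheaf is $\pi_1=\underZ$. Hence it is unchanged by $\tau_{\leq2}$.

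For the second factor, we use that delooping commutes with truncation up to a shift. For a grouplike object $G$ in $\mathcal X$ there is a natural equivalence $\tau_{\leq n+1}BG\simeq B(\tau_{\leq n}G)$. This holds because $B$ is an equivalence onto connected pointed objects by \cite[Lemma~7.2.2.11]{LurieHTT}, and the homotopy sheaves of $BG$ are those of $G$ shifted up by one. Applying this twice, $\tau_{\leq2}B^2\GL_1(\sOO_X)\simeq B^2\tau_{\leq0}\GL_1(\sOO_X)$. It therefore remains to identify $\tau_{\leq0}\GL_1(\sOO_X)$ with the discrete sheaf of abelian groups $(\pi_0\sOO_X)^\times$.

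To make that identification, we construct the global map $\GL_1(\sOO_X)\to\Omega^\infty\sOO_X$ from the definition \eqref{eq:internal-GL1-definition}: evaluate an automorphism of $\sOO_U$ at the unit. Passing to $\pi_0$-sheaves, this map lands in the subsheaf of units of $\pi_0\sOO_X$. We then test the resulting map $\pi_0\GL_1(\sOO_X)\to(\pi_0\sOO_X)^\times$ on stalks. By \eqref{eq:internal-GL1-stalk} and \eqref{eq:GL1-homotopy}, the stalk of the source is $(\pi_0\sOO_{X,x})^\times$. By Lemma~\ref{lem:filtered-pi} together with the unit argument in Lemma~\ref{lem:GL1-filtered}, the stalk of the target is the same group. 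Since $\mathcal X$ has enough points, the map is an isomorphism. Its $E_\infty$-structure makes the sheaf abelian, so $B^2$ of it is the Eilenberg--Mac Lane object $K((\pi_0\sOO_X)^\times,2)$. Naturality follows because every map used is built globally from restriction.

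For the second sentence of the statement, we use the same shift. The homotopy sheaves satisfy $\pi_{k+2}B^2\GL_1(\sOO_X)\cong\pi_k\GL_1(\sOO_X)$, and by \eqref{eq:GL1-homotopy}, checked stalkwise, this is $\pi_k\sOO_X$ for $k\geq1$. So the Postnikov layers in degrees $\geq3$ are $K(\pi_{j-2}\sOO_X,j)$.

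The main obstacle is the passage from stalkwise homotopy groups to homotopy sheaves. One must make sure the global comparison maps exist before appealing to enough points, in line with Remark~\ref{rem:enough-points}. The delooping/truncation interchange also has to be justified internally in $\mathcal X$ rather than only in spaces; I would do this by checking homotopy sheaves stalkwise, which is legitimate because stalk functors commute with $B$ and with $\tau_{\leq n}$.
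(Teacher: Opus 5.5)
Your proposal is correct and follows essentially the paper's argument. The paper also truncates the splitting of Theorem~\ref{thm:main} factor by factor, reading off from Corollary~\ref{cor:homotopy-sheaves} that $K(\underZ,1)$ has only $\pi_1$ and that $\tau_{\leq2}B^2\GL_1(\sOO_X)$ has only $\pi_2=(\pi_0\sOO_X)^\times$. Your interchange $\tau_{\leq2}B^2G\simeq B^2\tau_{\leq0}G$ and your globally constructed map $\GL_1(\sOO_X)\to\Omega^\infty\sOO_X$ spell out the Eilenberg--Mac Lane identification and the global comparison that the paper leaves implicit.

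One correction: $\tau_{\leq2}$ is not a left exact localization, since it does not preserve pullbacks such as loop objects. It does, however, preserve finite products in an $\infty$-topos. For instance, $X\times Y\to\tau_{\leq n}X\times\tau_{\leq n}Y$ is a composite of base changes of $(n+1)$-connective maps and has $n$-truncated target; alternatively, apply your own stalkwise test. So your first display stands.
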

\begin{proof}
Use Corollary~\ref{cor:homotopy-sheaves}.  The first factor has only
$\pi_1$, and the two-truncation of $B^2\GL_1(\sOO_X)$ has only
$\pi_2=(\pi_0\sOO_X)^\times$.
\end{proof}

\begin{corollary}\label{cor:homotopy-sheaves}
The homotopy sheaves of $\BrPic_X$ are
\[
 \pi_i\BrPic_X\cong
 \begin{cases}
  *, & i=0,\\
  \underZ, & i=1,\\
  (\pi_0\sOO_X)^\times, & i=2,\\
  \pi_{i-2}\sOO_X, & i\geq3.
 \end{cases}
\]
\end{corollary}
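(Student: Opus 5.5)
The plan is to compute the homotopy sheaves directly from the product decomposition of Theorem~\ref{thm:main}. Homotopy sheaves of a product of pointed hypersheaves are the products of the homotopy sheaves, because sheafification of homotopy presheaves commutes with finite products. It therefore suffices to compute the homotopy sheaves of $K(\underZ,1)$ and of $B^2\GL_1(\sOO_X)$ separately.

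For the first factor, $K(\underZ,1)$ is connected with $\pi_1=\underZ$ and no other nonzero homotopy sheaves, by construction as the delooping of a discrete group object.

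For the second factor, I will use that delooping shifts homotopy sheaves up by one. For a grouplike object $G$ in the hypercomplete topos, \cite[Lemma~7.2.2.11]{LurieHTT} gives $\Omega BG\simeq G$ and $BG$ connected. Hence $\pi_{i}BG\cong\pi_{i-1}G$ for $i\geq1$, where $\pi_0 G$ is the sheaf of components and the higher homotopy sheaves are taken at the identity section. Applying this twice gives
\[
 \pi_i B^2\GL_1(\sOO_X)\cong\pi_{i-2}\GL_1(\sOO_X)\quad(i\geq2),
 \qquad
 \pi_i B^2\GL_1(\sOO_X)=*\quad(i\leq1).
\]
The connectivity in degree $1$ follows because $B\GL_1(\sOO_X)$ is connected, so its delooping is $1$-connected.

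It then remains to identify the homotopy sheaves of $\GL_1(\sOO_X)$. I will do this on stalks, since the topos has enough points and the homotopy sheaves of a hypersheaf have stalks equal to the homotopy groups of its stalk. By \eqref{eq:internal-GL1-stalk}, which was proved as Step~3 of Proposition~\ref{prop:Pic-stalk}, the stalk is $\GL_1(\sOO_{X,x})$. By \eqref{eq:GL1-homotopy} this has $\pi_0=(\pi_0\sOO_{X,x})^\times$ and $\pi_j=\pi_j\sOO_{X,x}$ for $j\geq1$.

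These stalkwise isomorphisms still have to be assembled into global morphisms of sheaves, following Remark~\ref{rem:enough-points}. For $\pi_0$, I will use the natural map of sheaves of sets
\[
 \pi_0\GL_1(\sOO_X)\longrightarrow(\pi_0\sOO_X)^\times,
\]
induced by the inclusion $\GL_1\subseteq\Omega^\infty$ of unit components. For $j\geq1$, I will use the natural equivalence of based loop objects $\Omega\GL_1(\sOO_X)\simeq\Omega\Omega^\infty\sOO_X$ at the unit, which comes from this inclusion of components. Here $\Omega^\infty\sOO_X$ is the underlying hypersheaf of spaces. Both maps are globally defined and are isomorphisms on stalks by Lemma~\ref{lem:filtered-pi} and Lemma~\ref{lem:GL1-filtered}. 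Enough points then promotes them to isomorphisms of sheaves.

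Combining the pieces gives the table. In degree $0$ we get $*$ because both factors are connected. In degree $1$ we get $\underZ$ from the first factor alone. In degree $2$ we get $(\pi_0\sOO_X)^\times$. In degrees $i\geq3$ we get $\pi_{i-2}\sOO_X$.

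The main obstacle is the global comparison $\pi_{j}\GL_1(\sOO_X)\cong\pi_j\sOO_X$ for $j\geq1$, which requires identifying base points consistently. The unit section $1\in\Gamma(U,\Omega^\infty\sOO_U)$ lies in the unit component on every $U$, so the homotopy sheaves of $\sOO_X$ can be based at $1$. The loop-space identification then shows that translation to the unit component does not change them. Once this is checked, the rest is formal bookkeeping of the shifts.
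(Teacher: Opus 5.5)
Your proposal is correct and follows essentially the same route as the paper: read off the homotopy sheaves from the product decomposition of Theorem~\ref{thm:main} by two deloopings, and identify the homotopy sheaves of $\GL_1(\sOO_X)$ using the stalk identification \eqref{eq:internal-GL1-stalk} together with \eqref{eq:GL1-homotopy}. Your additional care is welcome but not a different argument: you construct global comparison maps before testing them on stalks, and you track the basepoint shift from $0$ to the unit $1$; the paper's short proof leaves both of these implicit.
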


\begin{proof}
For $j\geq1$, the higher homotopy sheaves of $\GL_1(\sOO_X)$ agree with
those of $\sOO_X$, while
$\pi_0\GL_1(\sOO_X)=(\pi_0\sOO_X)^\times$; use
\cite[Definition~5.3.2.4]{LurieHA}, \eqref{eq:GL1-homotopy}, and the stalk
identification \eqref{eq:internal-GL1-stalk}.  Compare the analogous étale-site calculation
in \cite[Corollary~7.11]{AntieauGepner}.  Apply two deloopings in
\eqref{eq:main}.  Since $\BrPic_X$ is connected, its sheaf of components is
the terminal sheaf $*$; it is not being identified with an abelian zero
group.
\end{proof}

\begin{remark}[Global classes and multiplicative precision]
Taking global sections of \eqref{eq:main} gives a natural bijection of
components
\begin{equation}\label{eq:global-components}
 \pi_0\Gamma(X,\BrPic_X)
 \cong
 H^1(X;\underZ)\times
 \pi_0\Gamma(X,B^2\GL_1(\sOO_X)),
\end{equation}
where Eilenberg--Mac Lane objects represent sheaf cohomology
\cite[Definition~7.2.2.14]{LurieHTT}.  Because the splitting was asserted at
the $E_1$ level, \eqref{eq:global-components} is stated first as a bijection
of pointed sets.  A claim that an independently chosen abelian group law is
the componentwise product law would require a separate $E_\infty$-level
compatibility check.
\end{remark}

\subsection{Global sections and hypercohomological meaning}

For a sheaf of abelian groups $A$, the Eilenberg--Mac Lane object
$K(A,n)$ represents sheaf cohomology
\cite[Definition~7.2.2.14]{LurieHTT}:
\begin{equation}\label{eq:EM-representability}
 \pi_0\Gamma(X,K(A,n))\cong H^n_{\mathrm{sh}}(X;A).
\end{equation}
For a non-discrete grouplike $E_\infty$-space sheaf $G$, however,
$\pi_0\Gamma(X,B^2G)$ is degree-two hypercohomology.  It cannot generally
be replaced by ordinary cohomology of $\pi_0G$, because higher homotopy
sheaves contribute through the Postnikov tower.

Taking global sections in Theorem~\ref{thm:main} gives
\begin{equation}\label{eq:global-general}
 \pi_0\Gamma(X,\BrPic_X)
 \cong H^1_{\mathrm{sh}}(X;\underZ)\times
 \pi_0\Gamma(X,B^2\GL_1(\sOO_X)).
\end{equation}
If $\sOO_X$ is discrete, $\GL_1(\sOO_X)$ is the discrete unit sheaf and
the second term is $H^2_{\mathrm{sh}}(X;\sOO_X^\times)$.  For a genuinely
derived structure sheaf that simplification is generally false; the
homotopy sheaves in Corollary~\ref{cor:homotopy-sheaves} record the extra
layers.

\section{Categorical and algebraic realization: precise boundaries}

Theorem~\ref{thm:main} classifies Picard torsors without first constructing
a stack of linear categories.  Two additional steps are often desired, but
they have distinct hypotheses.

\subsection{Conditional realization as forms of the module category}

Let $\PrL_{\mathrm{st}}$ denote the symmetric monoidal $\infty$-category
of presentable stable $\infty$-categories and colimit-preserving functors;
we use the presentable-category tensor product and its module-object
formalism from \cite[Sections~4.8.1 and~4.8.3]{LurieHA}.
For every open $U$ (and every coproduct object occurring in an open
hypercover) define the ambient $\infty$-category of linear presentable
categories by
\begin{equation}\label{eq:ambient-linear-categories}
 \Cat^{\mathrm{lin}}_{\sOO}(U)
 :=\Mod_{\M(U)}(\PrL_{\mathrm{st}}).
\end{equation}
Thus an object of \eqref{eq:ambient-linear-categories} is a presentable
stable category with a colimit-preserving action of the presentably
symmetric monoidal category $\M(U)$, and its morphisms are
$\M(U)$-linear colimit-preserving functors.

For $V\subseteq U$, the symmetric monoidal restriction
$\M(U)\to\M(V)$ makes $\M(V)$ an $\M(U)$-algebra and hence an
$\M(U)$-module.  The typed restriction functor on linear categories is
extension of scalars:
\begin{equation}\label{eq:linear-category-base-change}
 \operatorname{res}_{U,V}(\C)
 :=\M(V)\otimes_{\M(U)}\C
 \in\Cat^{\mathrm{lin}}_{\sOO}(V).
\end{equation}
For a morphism $f\colon W\to V$ between coproduct objects in a hypercover,
the same formula reads
$\operatorname{res}_f(\C)=\M(W)\otimes_{\M(V)}\C$; thus every face and
degeneracy map in \eqref{eq:category-valued-hyperdescent} has a typed
restriction functor.
Associativity of relative tensor products supplies coherent equivalences
$\operatorname{res}_{V,W}\operatorname{res}_{U,V}\simeq
\operatorname{res}_{U,W}$, so
$U\mapsto\Cat^{\mathrm{lin}}_{\sOO}(U)$ is a category-valued presheaf.

\begin{assumption}[Category-valued open hyperdescent]\label{ass:cat-descent}
There is a full subpresheaf
\begin{equation}\label{eq:desc-linear-substack}
 \Cat^{\mathrm{desc}}_{\sOO}(U)
 \subseteq\Cat^{\mathrm{lin}}_{\sOO}(U)
\end{equation}
closed under the base-change functors
\eqref{eq:linear-category-base-change}, containing the regular module
category $\M(U)$, and satisfying, for every open hypercover
$U_\bullet\to U$,
\begin{equation}\label{eq:category-valued-hyperdescent}
 \Cat^{\mathrm{desc}}_{\sOO}(U)
 \xrightarrow{\ \simeq\ }
 \operatorname*{lim}_{[n]\in\Delta}
 \Cat^{\mathrm{desc}}_{\sOO}(U_n).
\end{equation}
The limit in \eqref{eq:category-valued-hyperdescent} is taken in
$\CatInf$; it therefore includes descent both for objects and for every
mapping space of linear left adjoints, with all coherent natural
equivalences.  The regular module is compatible with restriction because
\begin{equation}\label{eq:regular-module-base-change}
 \M(V)\otimes_{\M(U)}\M(U)\simeq\M(V).
\end{equation}
\end{assumption}

\begin{definition}[Categorical form]
Under Assumption~\ref{ass:cat-descent}, an $\sOO_X$-linear categorical form
over an open subset $U$ is an object
$\C\in\Cat^{\mathrm{desc}}_{\sOO}(U)$ for which there exists an open cover
\[
 \{U_i\longrightarrow U\}_{i\in I}
\]
and an $\sOO_{U_i}$-linear equivalence
$t_i\colon\C|_{U_i}\simeq\M(U_i)$ for every $i$.  No compatibility among
the chosen local trivializations $t_i$ is required.  Morphisms of forms are
$\sOO$-linear equivalences with their coherent higher homotopies.
\end{definition}

For a global form
$\C\in\Cat^{\mathrm{desc}}_{\sOO}(X)$ we henceforth write
\begin{equation}\label{eq:typed-C-sections}
 \C(U):=\operatorname{res}_{X,U}(\C)
 =\M(U)\otimes_{\M(X)}\C,
 \qquad
 \Gamma(X,\C):=\C(X).
\end{equation}
Thus an element $E\in\Gamma(X,\C)$ is an object of the global presentable
category, and $E|_U$ is its image under the base-change functor defining
\(\C(U)\).  This is the notation used throughout the internal Morita
subsection.

In particular, the automorphism group object used below is now explicitly
typed by
\begin{equation}\label{eq:typed-module-category-aut}
 \Aut_{\sOO}(\M)(U)
 :=\Aut_{\Cat^{\mathrm{desc}}_{\sOO}(U)}(\M(U)).
\end{equation}

The absence of compatibility in the definition is essential.  If the
$t_i$ extended to a compatible trivialization on the entire Čech nerve,
category-valued descent would glue them to a global equivalence
$\C\simeq\M(U)$, leaving only the trivial form.

To see the torsor data explicitly, write $U_{ij}=U_i\cap U_j$.  On a
double overlap the composite
\begin{equation}\label{eq:form-transition}
 g_{ij}:=t_i|_{U_{ij}}\circ(t_j|_{U_{ij}})^{-1}
 \in\Aut_{\sOO}(\M)(U_{ij})
\end{equation}
is a transition autoequivalence.  On $U_{ijk}$ the two composites
$g_{ij}g_{jk}$ and $g_{ik}$ are not required to be equal; the chosen
trivializations produce a specified equivalence
\begin{equation}\label{eq:form-2-cell}
 \alpha_{ijk}\colon g_{ij}g_{jk}\simeq g_{ik}.
\end{equation}
On quadruple intersections the two composites of the $\alpha_{ijk}$ are
related by the next coherence, and the Čech nerve carries the entire tower
of higher coherences.  This is precisely a torsor for the group object
$\Aut_{\sOO}(\M)$.  Conversely, effective category-valued hyperdescent
glues such a coherent torsor cocycle to a categorical form.  Thus local
triviality is imposed only in simplicial degree zero; higher degrees record
transition data rather than compatible trivializations.

Under Assumption~\ref{ass:cat-descent}, define sectionwise
\begin{equation}\label{eq:typed-forms-hypersheaf}
 \operatorname{Forms}_{\sOO_X}(U)
 :=\left\{
 \C\in\bigl(\Cat^{\mathrm{desc}}_{\sOO}(U)\bigr)^\simeq
 \ \middle|\
 \ \C\text{ is open-locally equivalent to }\M(U)
 \right\}.
\end{equation}
Here the local comparison means
$\C|_{U_i}\simeq\M(U_i)$ after the base change
\eqref{eq:linear-category-base-change}.  The local condition is preserved
under pullback, and its effectivity follows from
\eqref{eq:category-valued-hyperdescent}; hence
\eqref{eq:typed-forms-hypersheaf} is a hypersheaf of spaces.

We prove the required Eilenberg--Watts statement directly on the present
open site.  Fix an open $U$.  Here
$\Fun^L_{\sOO_U}(\M(U),\M(U))$ means colimit-preserving
$\M(U)$-module functors: their linearity datum consists of coherent
equivalences
\begin{equation}\label{eq:EW-linearity-datum}
 F(N\otimes_{\sOO_U}P)
 \simeq N\otimes_{\sOO_U}F(P),
 \qquad N,P\in\M(U).
\end{equation}
Evaluation at the tensor unit and tensoring define
functors
\begin{align}
 \operatorname{ev}_{\sOO_U}\colon
 \Fun^L_{\sOO_U}(\M(U),\M(U))&\longrightarrow\M(U),
 &F&\longmapsto F(\sOO_U),\label{eq:EW-evaluation}\\
 T_U\colon\M(U)&\longrightarrow
 \Fun^L_{\sOO_U}(\M(U),\M(U)),
 &L&\longmapsto(-\otimes_{\sOO_U}L).
 \label{eq:EW-tensor}
\end{align}
The composite $\operatorname{ev}_{\sOO_U}T_U$ is the identity because
$\sOO_U\otimes_{\sOO_U}L\simeq L$.  Conversely, an
$\M(U)$-linear colimit-preserving functor $F$ is determined by its value on
the free rank-one module.  For every $N\in\M(U)$, use the right-unit
equivalence and then the linearity datum \eqref{eq:EW-linearity-datum} with
$P=\sOO_U$ to obtain
\begin{equation}\label{eq:EW-sectionwise}
 F(N)\simeq F(N\otimes_{\sOO_U}\sOO_U)
 \simeq N\otimes_{\sOO_U}F(\sOO_U).
\end{equation}
The unit and associativity coherences for a module functor make these
equivalences natural in $N$ and compatible with the $\M(U)$-actions.
They are the counit of $T_U\operatorname{ev}_{\sOO_U}$, while the unit is
$\sOO_U\otimes_{\sOO_U}L\simeq L$.  The two triangle identities reduce to
the unit coherence of the relative tensor product.  Hence
\eqref{eq:EW-evaluation} and \eqref{eq:EW-tensor} are inverse equivalences.

Restricting \eqref{eq:EW-sectionwise} to autoequivalences, $F$ is
invertible exactly when $F(\sOO_U)$ is tensor-invertible; an inverse
functor evaluates to a tensor inverse, and conversely tensoring with a
tensor inverse supplies the inverse functor.  Hence
\begin{equation}\label{eq:EW-aut-pic}
 \Aut^L_{\sOO_U}(\M(U))\simeq\Pic(\M(U)).
\end{equation}
Under \eqref{eq:linear-category-base-change}, the regular module category
$\M(U)$ restricts to $\M(V)$ by \eqref{eq:regular-module-base-change}, and
an endofunctor restricts by extension of scalars.  Evaluation at the unit
then sends the restricted functor to the restriction of $F(\sOO_U)$, while
relative tensor product sends $-\otimes_{\sOO_U}L$ to
$-\otimes_{\sOO_V}L|_V$.  Thus \eqref{eq:EW-evaluation}--
\eqref{eq:EW-aut-pic} commute with restriction, so
\eqref{eq:EW-aut-pic} is natural in $U$ and is an equivalence of ordinary
open-site group hypersheaves.
Compare \cite[Proposition~3.1]{AntieauGepner} for full faithfulness of the
pointed module-category construction and
\cite[Proposition~7.6]{AntieauGepner} for the analogous identity
$\Omega\operatorname{Br}\simeq\operatorname{Pic}$ on the étale site.
Neither proposition is used as a direct open-site descent theorem.

Effective descent for the transition autoequivalences, together with
\eqref{eq:EW-aut-pic}, now gives
\begin{equation}\label{eq:conditional-forms}
 \operatorname{Forms}_{\sOO_X}\simeq B\Pic_{\sOO_X}=\BrPic_X.
\end{equation}
Delooping classifies torsors under the group hypersheaf in
\eqref{eq:EW-aut-pic} and explains the single delooping in
\eqref{eq:conditional-forms}.
Thus \eqref{eq:conditional-forms} is a conditional categorical-forms
interpretation of the unconditional Picard-torsor theorem.  It is not
obtained merely by evaluating an fppf theorem objectwise on open subsets.

\subsection{Conditional realization by an internal \texorpdfstring{$E_1$}{E1}-algebra}
\label{sec:morita-details}

\begin{definition}[Compact local generator]
Fix a basis $\mathcal B$ of $|X|$ that is closed under finite
intersections.  Let
$\C\in\Cat^{\mathrm{desc}}_{\sOO}(X)$ be a global categorical form.  A
global section $E\in\Gamma(X,\C)$ is a \emph{global compact local generator
relative to $\mathcal B$} if, for every $U\in\mathcal B$, the same object
$E|_U$ is simultaneously compact and generating in $\C(U)$.  Explicitly,
\begin{enumerate}[label=\textup{(\roman*)},leftmargin=3em]
\item $\Map^{\Sp}_{\C(U)}(E|_U,-)$ preserves filtered colimits; and
\item the smallest stable, colimit-closed full subcategory containing
$E|_U$ is all of $\C(U)$, equivalently
\[
 \Map^{\Sp}_{\C(U)}(E|_U,Y)\simeq0
 \quad\Longrightarrow\quad Y\simeq0.
\]
\end{enumerate}
\end{definition}

The adjective ``global'' concerns the existence of one section $E$ on
$X$; the adjective ``local'' concerns its generating property after
restriction.  A collection of unrelated generators $E_i$ on a cover need
not descend to one global section.  This distinction is precisely why the
representability statement below is conditional.  Requiring both
properties on one fixed basis parallels the pullback-stable condition in
\cite[Definition~3.4]{Toen}, where a compact local generator remains a
compact generator after every affine test-object pullback.

Suppose, in addition, that a categorical form $\C$ has a globally defined
compact local generator
\[
 E\in\Gamma(X,\C).
\]

\begin{assumption}[Internal mapping objects, base change, and Morita continuity]
\label{ass:internal-hom-base-change}
For every open $U$, the $\sOO_U$-linear category $\C(U)$ is enriched,
tensored, and cotensored over $\M(U)$ and has internal mapping objects
\[
 \underline{\operatorname{Hom}}_{\C(U)}(Y,Z)\in\M(U).
\]
For every inclusion $j_{VU}\colon V\hookrightarrow U$, restriction is
compatible with those objects through a specified equivalence
\begin{equation}\label{eq:internal-hom-base-change}
 j_{VU}^*\underline{\operatorname{Hom}}_{\C(U)}(Y,Z)
 \xrightarrow{\ \simeq\ }
 \underline{\operatorname{Hom}}_{\C(V)}(Y|_V,Z|_V),
\end{equation}
natural in $Y,Z$ and coherently compatible with compositions of open
inclusions.  The internal mapping objects, their evaluation/composition
maps, and their section spectra satisfy open-hypercover descent.

For the chosen section $E$ above and every $U\in\mathcal B$, we also assume
that the enriched right adjoint
\begin{equation}\label{eq:internal-Morita-continuity}
 \underline{\operatorname{Hom}}_{\C(U)}(E|_U,-)
 \colon\C(U)\longrightarrow\M(U)
\end{equation}
preserves all small colimits and is an $\M(U)$-module functor.  Explicitly,
the natural projection-formula morphism adjoint to evaluation is required
to be an equivalence
\begin{equation}\label{eq:Morita-projection-formula}
 P\otimes_{\sOO_U}
 \underline{\operatorname{Hom}}_{\C(U)}(E|_U,Y)
 \xrightarrow{\ \simeq\ }
 \underline{\operatorname{Hom}}_{\C(U)}
 \bigl(E|_U,P\otimes_{\sOO_U}Y\bigr)
\end{equation}
for all $P\in\M(U)$ and $Y\in\C(U)$, coherently in both variables and
compatibly with restriction.  Together with generation, these conditions
say that $E|_U$ is an $\M(U)$-atomic generator.  They are the relative
compactness conditions needed by the internal Morita argument.
\end{assumption}

Mere $\M$-linearity of restriction supplies a natural comparison map in
\eqref{eq:internal-hom-base-change}; it does not force that map to be an
equivalence.  The equivalence is therefore part of the conditional Morita
hypotheses rather than a consequence silently extracted from linearity.
Likewise, ordinary compactness of $E|_U$ controls the mapping-spectrum
functor in the preceding definition, but in a general sheaf-linear setting
it need not by itself imply the colimit preservation in
\eqref{eq:internal-Morita-continuity}.  Both requirements have therefore
been stated explicitly.  Over an affine base whose module category is
compactly generated by its unit, the latter condition follows from the
usual compactness criterion; the distinction matters here because
$\M(U)$ is an internal module category on an arbitrary open set.  The same
warning applies to \eqref{eq:Morita-projection-formula}: enrichment alone
constructs its natural morphism but does not make that morphism invertible.
\subsubsection{Construction and descent of the endomorphism algebra}

For each open $U$, define the internal endomorphism algebra object
\begin{equation}\label{eq:internal-end}
 \A|_U:=
 \underline{\operatorname{End}}_{\C(U)}(E|_U)
 :=\underline{\operatorname{Hom}}_{\C(U)}(E|_U,E|_U)
 \in\operatorname{Alg}_{E_1}(\M(U)).
\end{equation}
Composition and the identity of $E|_U$ give the multiplication and unit.
Its spectrum of sections is the ordinary endomorphism spectrum:
\begin{equation}\label{eq:internal-end-sections}
 \Gamma(U,\A|_U)
 \simeq\Map^{\Sp}_{\C(U)}(E|_U,E|_U).
\end{equation}
Thus \eqref{eq:internal-end} is an object internal to $\M(U)$, whereas
\eqref{eq:internal-end-sections} is only its section spectrum.

For $V\subseteq U$, Assumption~\ref{ass:internal-hom-base-change} gives the
required equivalence of internal $E_1$-algebras
\begin{equation}\label{eq:end-base-change}
 j_{VU}^*(\A|_U)
 \xrightarrow{\ \simeq\ }
 \A|_V.
\end{equation}
The equivalence respects multiplication and the unit because
\eqref{eq:internal-hom-base-change} is coherently compatible with
composition and identities.  These equivalences make
$U\mapsto\A|_U$ a Cartesian section of the stack of $E_1$-algebra objects.
For an open hypercover $U_\bullet\to U$, internal mapping-object descent
therefore yields
\begin{equation}\label{eq:internal-end-descent}
 \A|_U\simeq
 \operatorname*{lim}_{[n]\in\Delta}\A|_{U_n}
\end{equation}
in the relative total category of $E_1$-algebras in the varying
$\M(U_n)$.  We denote this internal algebra sheaf by
$\A=\underline{\operatorname{End}}_{\C}(E)$.

The $\sOO_U$-linear action on $E|_U$ gives compatible central maps
$\sOO_U\to\A|_U$.  Hence $\A$ is an $\sOO_X$-linear $E_1$-algebra sheaf.
The ordinary global endomorphism spectrum is only
\begin{equation}\label{eq:global-end-is-sections}
 \Map^{\Sp}_{\C(X)}(E,E)\simeq\Gamma(X,\A),
\end{equation}
and cannot replace the family of internal objects $\A|_U$ and the
base-change equivalences \eqref{eq:end-base-change}.

\subsubsection{Right modules from precomposition}

Define the right-module category sectionwise by
\begin{equation}\label{eq:RMod-sheaf-definition}
 \RMod_{\A}(U):=
 \RMod_{\A|_U}\bigl(\M(U)\bigr).
\end{equation}
The base-change equivalences \eqref{eq:end-base-change} and the symmetric
monoidal restriction $\M(U)\to\M(V)$ give restriction functors
$\RMod_{\A}(U)\to\RMod_{\A}(V)$.  Associative-algebra/right-module
versions of the relative-limit argument in
Lemma~\ref{lem:module-limits}, using
\cite[Theorem~3.4.3.1]{LurieHA}, show that
$U\mapsto\RMod_{\A}(U)$ satisfies open hyperdescent.

Fix an open $U$ and the multiplication convention $ab:=a\circ b$.  For
$Y\in\C(U)$, precomposition gives
\[
 f\cdot a:=f\circ a,
 \qquad
 f\in\underline{\operatorname{Hom}}_{\C(U)}(E|_U,Y),
 \quad a\in\A|_U.
\]
Associativity is the calculation
\[
 (f\cdot a)\cdot b=(f\circ a)\circ b
 =f\circ(a\circ b)=f\cdot(ab),
\]
and the identity of $E|_U$ acts as the identity.  Therefore the precise
sectionwise type is
\begin{equation}\label{eq:hom-right-type}
 G_U:=\underline{\operatorname{Hom}}_{\C(U)}(E|_U,-)
 \colon\C(U)\longrightarrow\RMod_{\A}(U).
\end{equation}

Evaluation gives a left action
$(\A|_U)\otimes_{\sOO_U}(E|_U)\to E|_U$, so
$E|_U\in\LMod_{\A|_U}(\C(U))$.  Consequently the relative tensor product
defines
\begin{equation}\label{eq:tensor-Morita-U}
 F_U:=-\otimes_{\A|_U}E|_U
 \colon\RMod_{\A}(U)\longrightarrow\C(U).
\end{equation}
The tensor--Hom adjunction gives, for every open $U$,
\begin{equation}\label{eq:morita-adjunction}
 F_U=-\otimes_{\A|_U}E|_U
 \quad\dashv\quad
 G_U=\underline{\operatorname{Hom}}_{\C(U)}(E|_U,-).
\end{equation}
Equations \eqref{eq:internal-hom-base-change} and
\eqref{eq:end-base-change}, together with base change for relative tensor
products, identify the restrictions of $(F_U,G_U)$ to $V$ with
$(F_V,G_V)$.  Hence the sectionwise adjunctions assemble to an adjunction
between category-valued hypersheaves
\begin{equation}\label{eq:morita-stack-adjunction}
 F\colon\RMod_{\A}\rightleftarrows\C:G.
\end{equation}

\subsubsection{The compact-generator Morita argument}

Fix $U\in\mathcal B$.  The unit and counit of the sectionwise adjunction
\eqref{eq:morita-adjunction} are
\begin{align}
 \eta_{U,N}&\colon N\longrightarrow
 \underline{\operatorname{Hom}}_{\C(U)}
 \bigl(E|_U,N\otimes_{\A|_U}E|_U\bigr),
 \label{eq:morita-unit}\\
 \epsilon_{U,Y}&\colon
 \underline{\operatorname{Hom}}_{\C(U)}(E|_U,Y)
 \otimes_{\A|_U}E|_U
 \longrightarrow Y.\label{eq:morita-counit}
\end{align}
The counit is evaluation.  On the free right module $\A|_U$, the unit is
the identity under
$(\A|_U)\otimes_{\A|_U}E|_U\simeq E|_U$ and the definition
$\underline{\operatorname{End}}_{\C(U)}(E|_U)=\A|_U$.

The Morita-continuity clause
\eqref{eq:internal-Morita-continuity} makes $G_U$ preserve all colimits, and
$F_U$ preserves them because it is a left adjoint.  This is the point at
which enriched compactness, rather than only compactness of the underlying
mapping-spectrum functor, enters the internal argument.

Let
\[
 \operatorname{Free}_U(P):=P\otimes_{\sOO_U}(\A|_U),
 \qquad P\in\M(U),
\]
be the free right $\A|_U$-module.  For every $P$, $\M(U)$-linearity of
$F_U$ and the projection formula \eqref{eq:Morita-projection-formula} give
\begin{align}
 F_U\operatorname{Free}_U(P)
 &\simeq P\otimes_{\sOO_U}E|_U,\label{eq:Morita-free-F}\\
 G_UF_U\operatorname{Free}_U(P)
 &\simeq G_U(P\otimes_{\sOO_U}E|_U)\notag\\
 &\simeq P\otimes_{\sOO_U}G_U(E|_U)
 \simeq P\otimes_{\sOO_U}(\A|_U).
 \label{eq:Morita-free-GF}
\end{align}
Under \eqref{eq:Morita-free-F}--\eqref{eq:Morita-free-GF}, the unit
$\eta_{U,\operatorname{Free}_U(P)}$ is the identity of
$P\otimes_{\sOO_U}(\A|_U)$: for $P=\sOO_U$ this is the identity checked
above, and the general case is obtained from it by the coherent
$\M(U)$-module structure.  Thus the unit is an equivalence on every free
module, not only on the free rank-one module.

Now let $N\in\RMod_{\A}(U)$.  The free--forgetful adjunction supplies the
standard augmented monadic bar resolution
\begin{equation}\label{eq:Morita-free-resolution}
 B_\bullet(N)\longrightarrow N,
 \qquad |B_\bullet(N)|\xrightarrow{\ \simeq\ }N,
\end{equation}
Let
$U_{\A,U}\colon\RMod_{\A}(U)\to\M(U)$ be the forgetful functor.  This
augmented simplicial object is $U_{\A,U}$-split, and its $n$th term is
\[
 B_n(N)
 =
 \bigl(\operatorname{Free}_U U_{\A,U}\bigr)^{n+1}(N).
\]
In particular, every $B_n(N)$ is free.  The right-module specialization of
\cite[Example~4.7.3.9]{LurieHA} shows that the forgetful functor is
monadic.  The existence and functoriality statements in
\cite[Proposition~4.7.3.14 and Remark~4.7.3.15]{LurieHA} supply the
corresponding functorial $U_{\A,U}$-split resolution by free modules; the
displayed construction is its standard monadic bar model.  The free-module
adjunction is the
right-module analogue of \cite[Corollary~4.2.4.8]{LurieHA}.

Both the identity and $G_UF_U$ preserve geometric realizations.  Applying
the natural transformation $\eta_{U,-}$ degreewise to
\eqref{eq:Morita-free-resolution} gives equivalences because every
$B_n(N)$ is free, and realization therefore proves that $\eta_{U,N}$ is an
equivalence.  This avoids any assertion that the single internal module
$\A|_U$ is an ordinary generator of $\RMod_{\A}(U)$.

Finally, let
$\mathcal V_U\subseteq\C(U)$ be the full subcategory on which
$\epsilon_{U,-}$ is an equivalence.  It is stable, colimit closed, and
contains $E|_U$, because the counit on $E|_U$ identifies with
$(\A|_U)\otimes_{\A|_U}E|_U\simeq E|_U$.  Since $E|_U$ generates $\C(U)$,
$\mathcal V_U=\C(U)$.  Hence $(F_U,G_U)$ is an equivalence for every
$U\in\mathcal B$.

Now let $W$ be an arbitrary open subset and cover it by basis elements
$U_i\in\mathcal B$.  Base-change compatibility of the adjunction identifies
the restrictions of its unit and counit on $W$ with the already invertible
units and counits on all $U_i$ and their intersections.  Category-valued
hyperdescent for $\C$ and module hyperdescent for $\RMod_{\A}$ then imply
that the unit and counit on $W$ are equivalences.  Thus
\eqref{eq:morita-stack-adjunction} is an equivalence of category-valued
hypersheaves:
\begin{equation}\label{eq:morita-equivalence}
 \C(U)\simeq\RMod_{\A|_U}(\M(U))
 \quad\text{for every open }U,
 \qquad\text{hence}\qquad
 \C\simeq\RMod_{\A}.
\end{equation}
This is the internal, correctly sided form of the compact-generator/
endomorphism-algebra criterion in \cite[Proposition~3.6]{Toen}.

To\"en's theorem that produces a global compact local generator from local
ones assumes a quasi-compact, quasi-separated derived scheme, fppf descent,
and an fppf cover carrying a compact local generator
\cite[Theorem~3.7]{Toen}.  Those hypotheses do not automatically hold for
an arbitrary derived smooth manifold on its ordinary open site.  Hence
Theorem~\ref{thm:main} does not, by itself, prove that all its twists are
represented by derived Azumaya algebras.

\subsubsection{Exact boundary of the representability claim}

The implications established in this section are
\[
 \begin{gathered}
 \text{categorical form satisfying Assumptions~\ref{ass:cat-descent}}\\[-2pt]
 \text{and~\ref{ass:internal-hom-base-change}, plus a global compact local
 generator }E
 \end{gathered}
 \Longrightarrow
 \A=\underline{\End}_{\C}(E)
 \Longrightarrow
 \C\simeq\RMod_{\A}.
\]
The theorem of Toën cited above supplies the global-generator hypothesis
only under its qcqs derived-scheme and fppf assumptions.  None of the following
implications has been used:
\begin{enumerate}[label=\textup{(\alph*)},leftmargin=3em]
\item open-local triviality $\Rightarrow$ existence of one global compact
generator on an arbitrary non-quasi-compact derived smooth manifold;
\item an ordinary global endomorphism spectrum $\Rightarrow$ recovery of
all local restriction data;
\item a theorem on the fppf site $\Rightarrow$ category-valued descent on
the ordinary open site without comparison of the two sites.
\end{enumerate}
Accordingly, algebraic representability remains a conditional corollary,
whereas the Picard-torsor equivalence of Theorem~\ref{thm:main} is
unconditional.

\section{Recovery for ordinary real and complex manifolds}

Let $M$ be a paracompact ordinary smooth manifold.  Such a manifold is
locally contractible and Hausdorff.  For any abelian group $G$, constant-
sheaf cohomology agrees with singular cohomology:
\begin{equation}\label{eq:sheaf-singular}
 H^q_{\mathrm{sh}}(M;\underline G)
 \cong H^q_{\mathrm{sing}}(M;G).
\end{equation}
We use the arbitrary-coefficient comparison of
\cite[Main Theorem]{Sella}; in fact that paper removes the paracompactness
hypothesis under local contractibility.  In particular,
\eqref{eq:sheaf-singular} applies to $G=\Z$ and $G=\Z/2$.

\subsection{Reduction of the spectral formula in the discrete case}

Let $A$ be a sheaf of ordinary commutative rings whose stalks $A_x$ are
local rings, and set $\sOO=H A$.  Then $H A$ is connective with
$\pi_0(H A)=A$ and no positive homotopy sheaves, while its spectral stalk
is $H(A_x)$, a connective local $E_\infty$-ring.  The proof of
Theorem~\ref{thm:main} uses precisely this stalkwise connective-local
hypothesis, so it applies verbatim.  Formula \eqref{eq:GL1-homotopy} shows
that $\GL_1(H A)=A^\times$ is discrete, and hence
\begin{equation}\label{eq:discrete-BrPic}
 \BrPic_{HA}\simeq K(\underZ,1)\times K(A^\times,2).
\end{equation}
Taking global sections and using \eqref{eq:EM-representability} gives
\begin{equation}\label{eq:discrete-components}
 \pi_0\Gamma(M,\BrPic_{HA})
 \cong H^1_{\mathrm{sh}}(M;\underZ)
 \times H^2_{\mathrm{sh}}(M;A^\times).
\end{equation}
Thus the real and complex calculations reduce to explicit cohomology of
their unit sheaves.

The local-stalk hypothesis cannot be dropped.  On a one-point space with
$A=k\times k$, there is an equivalence
\[
 \Mod_{H(k\times k)}\simeq\Mod_{Hk}\times\Mod_{Hk},
\]
so the two factors may be suspended independently and
$\pi_0\Pic(H(k\times k))\cong\Z^2$.  The right side of
\eqref{eq:discrete-BrPic} would have only one copy of $\Z$ in degree one,
so \eqref{eq:discrete-BrPic} is not asserted for arbitrary ring sheaves.

For the two sheaves used below, the required hypothesis holds.  If
$\mathbb F=\R$ or $\mathbb C$, a germ
$f\in C^\infty_{M,x}(\mathbb F)$ is invertible exactly when $f(x)\neq0$.
Indeed, nonvanishing at $x$ persists on a neighbourhood and the pointwise
reciprocal is smooth there.  Thus the nonunits are exactly the germs
vanishing at $x$; they form the unique maximal ideal, so
$C^\infty_{M,x}(\mathbb F)$ is a local ring.

The comparison \eqref{eq:sheaf-singular} is invoked only for constant
sheaves.  Sella's theorem states the result for every abelian coefficient
group on a semi-locally contractible space
\cite[Main Theorem]{Sella}.  A smooth manifold is locally contractible and
hence satisfies the hypothesis.  We do not use a real-coefficient-only
de Rham comparison to justify the cases $\Z$ and $\Z/2$.

\subsection{Real coefficients}

Take the discrete spectral structure sheaf
\[
 \sOO_{M,\R}:=H\!\left(C^\infty_M(\R)\right).
\]
Its unit sheaf is the discrete sheaf $C^\infty_M(\R)^\times$.  There is an
isomorphism of sheaves of abelian groups
\begin{equation}\label{eq:real-units}
 C^\infty_M(\R)^\times
 \xrightarrow{\simeq}
 \underline{\Z/2}\times C^\infty_M(\R),
 \qquad
 f\longmapsto(\operatorname{sgn}(f),\log|f|),
\end{equation}
whose inverse is $(\varepsilon,g)\mapsto\varepsilon e^g$.

We verify this sheaf isomorphism sectionwise.  If
$f\in C^\infty(U,\R)^\times$, then $f$ has no zeros.  On every connected
component of $U$, the intermediate value theorem forces its sign to be
constant.  Hence $\operatorname{sgn}(f)$ is a section of the locally
constant sheaf $\underline{\Z/2}$.  The positive function $|f|$ has the
smooth logarithm $\log|f|$.  Conversely, if $\varepsilon$ is locally
constant with values in $\{\pm1\}$ and $g$ is smooth, then
$\varepsilon e^g$ is smooth and nowhere zero.  Finally,
\[
 \operatorname{sgn}(fg)=\operatorname{sgn}(f)\operatorname{sgn}(g),
 \qquad \log|fg|=\log|f|+\log|g|,
\]
so \eqref{eq:real-units} respects the abelian group laws.

The smooth-function sheaf is fine (indeed soft) by partitions of unity and
is therefore acyclic on a paracompact Hausdorff manifold; see
\cite[Theorems~4.4.3--4.4.4]{Arapura}.  Hence, for $q>0$,
\[
 H^q(M;C^\infty_M(\R))=0,
 \qquad
 H^q(M;C^\infty_M(\R)^\times)
 \cong H^q(M;\underline{\Z/2}).
\]
Here the acyclicity step is not merely the observation that
$C^\infty_M(\R)$ is a sheaf of vector spaces.  A locally finite smooth
partition of unity makes it a fine sheaf; the softness and acyclicity
results are the content of \cite[Theorems~4.4.3--4.4.4]{Arapura}.  Since
cohomology commutes with finite products of sheaves of abelian groups,
\eqref{eq:real-units} yields, for $q>0$,
\begin{align}
 H^q(M;C^\infty_M(\R)^\times)
 &\cong H^q(M;\underline{\Z/2})
 \times H^q(M;C^\infty_M(\R))\notag\\
 &\cong H^q(M;\underline{\Z/2}).
 \label{eq:real-unit-cohomology-detailed}
\end{align}
Theorem~\ref{thm:main}, followed by \eqref{eq:sheaf-singular}, gives a
natural bijection of pointed sets
\begin{equation}\label{eq:real-recovery}
 \boxed{
 \pi_0\Gamma(M,\BrPic_{M,\R})
 \cong
 H^1_{\mathrm{sing}}(M;\Z)
 \times
 H^2_{\mathrm{sing}}(M;\Z/2).}
\end{equation}
Indeed, the first factor in \eqref{eq:discrete-components} becomes
$H^1_{\mathrm{sing}}(M;\Z)$ by Sella, and the second becomes
\[
 H^2_{\mathrm{sh}}(M;C^\infty_M(\R)^\times)
 \overset{\eqref{eq:real-unit-cohomology-detailed}}{\cong}
 H^2_{\mathrm{sh}}(M;\underline{\Z/2})
 \overset{\eqref{eq:sheaf-singular}}{\cong}
 H^2_{\mathrm{sing}}(M;\Z/2).
\]

\begin{warning}
Equation \eqref{eq:real-recovery} is the real Picard-torsorial twisting
theory on the ordinary open site, not the full classical real Azumaya
Brauer group.  For $M=*$ its right-hand side is trivial, whereas
$\operatorname{Br}(\R)\cong\Z/2$, with the non-trivial class represented by
the quaternion algebra.  The latter becomes split after the \'etale
extension $\R\to\mathbb C$, which is not a non-trivial open cover of a
point; compare the \'etale-local discussion in the introduction of
\cite{AntieauGepner}.
\end{warning}

\subsection{Complex coefficients}

For the ordinary manifold $M$, set
\[
 \sOO_{M,\mathbb C}:=H\!\left(C^\infty_M(\mathbb C)\right).
\]
The smooth exponential sequence is
\begin{equation}\label{eq:exp-sequence}
 0\longrightarrow\underline\Z
 \longrightarrow C^\infty_M(\mathbb C)
 \xrightarrow{\exp(2\pi i\,\cdot)}
 C^\infty_M(\mathbb C)^\times
 \longrightarrow1.
\end{equation}

\subsubsection{Exactness of the smooth exponential sequence}

Exactness is a statement about sheaves and can be checked on stalks.
First, if $f$ is a complex-valued smooth function with
$\exp(2\pi i f)=1$, then $f$ takes values in $\Z$.  A continuous
integer-valued function is locally constant, so the kernel is exactly the
constant sheaf $\underZ$.  Conversely every locally constant
integer-valued function is in the kernel.

For local surjectivity, take a germ represented by
$g\in C^\infty(U,\mathbb C)^\times$ and a point $x\in U$.  Choose a small
simply connected disc $D\subset\mathbb C^\times$ around $g(x)$ on which a
smooth, indeed holomorphic, logarithm branch is defined.  Continuity of
$g$ gives a neighbourhood $x\in V\subseteq U$ with $g(V)\subseteq D$.
Then
\[
 h:=\frac{1}{2\pi i}\log(g|_V)\in C^\infty(V,\mathbb C)
 \quad\text{satisfies}\quad
 \exp(2\pi i h)=g|_V.
\]
Hence the exponential map is surjective on every stalk.  This proves
\eqref{eq:exp-sequence} in the smooth category; it is not an attribution of
a holomorphic exponential-sequence theorem to the smooth setting.

The sheaf $C^\infty_M(\mathbb C)$ is fine and acyclic by the same
partition-of-unity argument
\cite[Theorems~4.4.3--4.4.4]{Arapura}.  The long exact cohomology sequence
of \eqref{eq:exp-sequence} gives, for $q\geq1$,
\begin{equation}\label{eq:complex-units-cohomology}
 H^q(M;C^\infty_M(\mathbb C)^\times)
 \cong H^{q+1}(M;\underline\Z).
\end{equation}
To see the indices explicitly, the relevant segment is
\[
 H^q(M;C^\infty_M(\mathbb C))\longrightarrow
 H^q(M;C^\infty_M(\mathbb C)^\times)
 \xrightarrow{\delta}
 H^{q+1}(M;\underZ)\longrightarrow
 H^{q+1}(M;C^\infty_M(\mathbb C)).
\]
The two outer groups vanish when $q\geq1$, so the connecting homomorphism
$\delta$ is the isomorphism in
\eqref{eq:complex-units-cohomology}.  In particular, putting $q=2$ gives
\begin{equation}\label{eq:complex-H2-H3}
 H^2(M;C^\infty_M(\mathbb C)^\times)
 \cong H^3(M;\underZ)
 \cong H^3_{\mathrm{sing}}(M;\Z).
\end{equation}
Therefore Theorem~\ref{thm:main}, \eqref{eq:sheaf-singular}, and
\eqref{eq:complex-units-cohomology} give a natural bijection of pointed sets
\begin{equation}\label{eq:complex-recovery}
 \boxed{
 \pi_0\Gamma(M,\BrPic_{M,\mathbb C})
 \cong
 H^1_{\mathrm{sing}}(M;\Z)
 \times
 H^3_{\mathrm{sing}}(M;\Z).}
\end{equation}
The two factors arise separately: suspension degree gives
$H^1_{\mathrm{sing}}(M;\Z)$, and the unit gerbe gives the group in
\eqref{eq:complex-H2-H3}.  The connecting map $\delta$ is the smooth
Dixmier--Douady class.
The $H^3$ factor is the usual Dixmier--Douady term.  This subsection concerns
ordinary complex-valued smooth functions.  It makes no claim that an
arbitrary derived complexification
$\sOO_X\otimes_{H\R}H\mathbb C$ has local stalks without a separate proof.

\subsection{Examples and index checks}

These examples are not additional inputs to the proof.  They verify the
indices and clarify the difference between the open-site theory and the
classical real Azumaya Brauer group.

\subsubsection{A point and a contractible manifold}

For $M=*$, all positive-degree singular cohomology vanishes.  Both
\eqref{eq:real-recovery} and \eqref{eq:complex-recovery} therefore give one
Picard-torsorial class.  The same is true for any contractible smooth
manifold.  In the real case this contrasts with
$\operatorname{Br}(\R)\cong\Z/2$ because the quaternion class is split
étale-locally, not on a nontrivial open cover of a point.

\subsubsection{The circle}

Cellular cohomology gives $H^1(S^1;\Z)=\Z$ and vanishing in degrees at least
two.  Hence both real and complex open-site Picard-torsorial theories have
component set naturally identified with $\Z$.  Only the suspension-degree
factor remains.

\subsubsection{The two-sphere}

For $S^2$, one has $H^1(S^2;\Z)=0$,
$H^2(S^2;\Z/2)=\Z/2$, and $H^3(S^2;\Z)=0$.  Thus the real theory has two
components while the complex theory has one.  This checks that the real
unit obstruction is in degree two and the complex Dixmier--Douady
obstruction is in degree three.

\subsubsection{The three-sphere}

For $S^3$, $H^1(S^3;\Z)=0$, $H^2(S^3;\Z/2)=0$, and
$H^3(S^3;\Z)=\Z$.  The real theory is trivial on components, while the
complex theory is indexed by $\Z$.  This is the basic test for the index
shift in the smooth exponential sequence.

\subsubsection{Disconnected manifolds}

The sign in \eqref{eq:real-units} is locally constant rather than globally
constant.  On a disconnected open set it may take independent values on
different components.  This is precisely why the target is the constant
\emph{sheaf} $\underline{\Z/2}$ (the sheaf of locally constant maps), not
the constant presheaf with one global value.  The distinction disappears
on connected opens but is necessary for sheaf descent.

\end{document}